%
%




%


%



%


\documentclass[12pt]{amsart}

\usepackage[left=1in,right=1in,top=1in,bottom=1in]{geometry} 

\usepackage{amsmath}

\usepackage{amssymb,amsthm,amsmath}

\usepackage{ifthen}

\usepackage{graphicx,float}
\usepackage{caption,subcaption}
\usepackage{enumerate}

\DeclareGraphicsExtensions{.eps}

\newtheorem{theorem}{Theorem}[section]

\newtheorem{lemma}[theorem]{Lemma}

\newtheorem{proposition}[theorem]{Proposition}

\newtheorem{corollary}[theorem]{Corollary}

\theoremstyle{definition}

\newtheorem{definition}[theorem]{Definition}

\newtheorem{remark}[theorem]{Remark}

\numberwithin{equation}{section}




\newcommand{\C}{\mathbb{C}}

\newcommand{\e}{\epsilon}

\newcommand{\HI}{\mathbb{H}}

\newcommand{\BL}{\mathcal{B}}

\newcommand{\CL}{\mathcal{C}}

\makeatletter
\@namedef{subjclassname@2020}{\textup{2020} Mathematics Subject Classification}
\makeatother

\begin{document}

\baselineskip=21pt
\title[Pseudospectra of the heat operator pencil]
{Pseudospectra of the heat operator pencil}

\author{Krishna Kumar G.}
\address{Department of Mathematics, University of Kerala, Kariavattom Campus, Thiruvananthapuram, Kerala, India, 695581.}
{\email{krishna.math@keralauniversity.ac.in}
\thanks{The first author is supported through the SERB MATRICS grant with project reference no. MTR/2021/000028 and the PLEASE scheme, Department of Higher Education, Government of Kerala, Kerala, India.}
\author{Judy Augustine}
\address{Department of Mathematics, University of Kerala, Kariavattom Campus, Thiruvananthapuram, Kerala, India, 695581.}
 {\email{judyaugustine9@gmail.com}

\thanks{The second author thanks Kerala State Council for Science, Technology and Environment (KSCSTE), Kerala, India (Ref No: KSCSTE/990/2021-FSHP-MS) for financial support.}  
\subjclass[2020]{Primary ; Secondary } 


\subjclass[2020]{Primary 35K05, 47A08, 47A10; Secondary 15A22, 47A30, 80M20} 
\keywords{Heat equation, operator pencil, block operator matrices, non-self-adjoint, eigenvalue, pseudospectrum}
\begin{abstract}
This article undertakes an analysis of the one-dimensional heat equation, wherein the Dirichlet condition is applied at the left end and Neumann condition at the right end. The heat equation is restructured as a non-self-adjoint $2\times 2$ unbounded block operator matrix pencil. The spectral, pseudospectral, and $(n,\e)$-pseudospectral enclosures of the $2\times 2$ unbounded block operator matrix pencil are explored to scrutinize the heat operator pencil. The plots of the discretized equation are depicted to illustrate the observations.
\end{abstract}
\maketitle
\section{Introduction}
The heat transfer through an ideal rod of length $d$ with thermal diffusivity $c^2$ subject to the Dirichlet condition at the left and Neumann condition at the right end is formulated as,
\begin{align}\label{heat1}
\frac{\partial \phi}{\partial t}= c^2\frac{\partial^2 \phi}{\partial x^2}, \ \ 0\leq x\leq d, \ \  t\geq 0, \ \  \phi(0,t)= \phi_x(d,t)= 0.
\end{align}
The domain of the solution (one-dimensional heat equation) is a semi-infinite strip of width $d$ that continues indefinitely in time. The probabilistic approach, variational iteration, finite difference, and finite element are among the few methods used to study the one-dimensional heat equation; see \cite{doob, rama, sha, zlam}. The one-dimensional heat equation is reformulated in this article as a linear evolution process on a suitable Hilbert space. The closed linear operator pencil arising from the heat equation is illustrated as follows. Let $u= \begin{pmatrix} u_1 \\u_2 \end{pmatrix}= \begin{pmatrix}
\phi\\ \phi_x \end{pmatrix}$, then the system (\ref{heat1}) is converted into 
\begin{align}\label{heat2}
\mathcal{B} u_t= \mathcal{A} u, \ \ \mathcal{A}= \begin{pmatrix} c^2\cfrac{\partial^2}{\partial x^2} &&\\ && \end{pmatrix}, \ \ \mathcal{B}= \begin{pmatrix}I&\\ \cfrac{\partial}{\partial x} &-I\end{pmatrix}, \ \ u_1(0,t)= u_2(d,t)=0.
\end{align}
Suppose $u$ is variably separable and let 
\[
u(x,t)= e^{\lambda t}\psi(x)= e^{\lambda t} \begin{pmatrix}
\psi_1(x)\\\psi_2(x)
\end{pmatrix}.
\]
Then the system (\ref{heat2}) is converted into the generalized eigenvalue problem of the form 
\begin{align}\label{gevp}
(\lambda B-A)\psi= 0, \ \ A= \begin{pmatrix} c^2\cfrac{d^2}{dx^2} &&\\ && \end{pmatrix}, \ \ B= \begin{pmatrix}I&\\ \cfrac{d}{dx} &-I\end{pmatrix}, \ \ \psi_1(0)= \psi_2(d)= 0.
\end{align}
Thus, the one-dimensional heat equation defined in (\ref{heat1}) is transformed into a linear operator pencil, a $2\times 2$ unbounded block operator matrix pencil. If $(\lambda B-A)\psi= 0$ for some $\psi\neq 0$, then $\lambda$ is called the generalized eigenvalue and $\psi$ is the corresponding generalized eigenfunction. The domains $D(A)$ and $D(B)$ are suitable subspaces of the Hilbert space $L^2[0,d]\times L^2[0,d]$ defined by
\begin{align*}
D(A)&= \left\{v\in L^2[0,d]: v' \ \textnormal{is absolutely continous}, \ v''\in L^2[0,d]\right\}\times L^2[0,d],
\end{align*}
and 
\begin{align*}
D(B)&= \left\{v \in L^2[0,d]: v \ \textnormal{is absolutely continous}, \ v'\in L^2[0,d]\right\}\times L^2[0,d].
\end{align*}
Note that $B^2=I$ on $D(B)$ and for $\psi= \begin{pmatrix}\psi_1\\ \psi_2\end{pmatrix}\in L^2[0,d]\times L^2[0,d]$,
\begin{align*}
\|\psi\|^2= \int_0^{d} \left(\left|\psi_1(x)\right|^2+\left|\psi_2(x)\right|^2\right) \ dx.
\end{align*}
In \cite{nag}, Nagel introduces the matrix theory for unbounded operator matrices. One can find vast information about block operator matrices in \cite{tret}. This article focuses on the pseudospectral study of closed linear operator pencil and $2\times 2$ unbounded block operator matrix pencil to analyze the one-dimensional heat equation. 
\subsection{Background and Outline}
The operators that arise from the physical applications are closed but unbounded, and due to this, closed operators gained attention in the mathematical field. For more about closed operators, refer \cite{kato, kre}. Throughout this article, $\HI$ denotes a complex Hilbert space, and $I, {\bf 0}$ are the identity operator, zero operators on $\HI$. Further $\CL(\HI), \BL(\HI)$ are the set of all closed, bounded operators on $\HI$. The domain of $A\in \CL(\HI)$ is denoted as $D(A)$.

Let $A, B \in \CL(\HI)$ and $\lambda\in \C$, then the closed linear operator pencil 
\[
(A,B)(\lambda)= \lambda B-A
\]
is defined on $D(A)\cap D(B)$. If both $A$ and $B$ are self-adjoint, then $(A, B)$ is called a self-adjoint pencil. For more information on self-adjoint operator pencils, see \cite{mark}. The generalized resolvent of $(A,B)$ is defined by 
\[
\rho(A,B)=  \{\lambda \in \mathbb{C}:  (\lambda B-A)^{-1}\in \BL(\HI)\}.
\]
The spectrum of $(A, B)$ is defined by $\sigma(A, B)= \mathbb{C}\setminus \rho(A, B)$. The generalized eigenvalues of $(A,B)$ is defined by
\[
\sigma_e(A,B)= \{\lambda\in \C: \lambda B-A \ \textnormal{is not one-one}\}.
\]
The spectral analysis of operators fails to provide the correct information while dealing with non-normal operators. During the 1990s, Trefethen advocated the concept of pseudospectrum in the context of non-normal operators resulting from the discretization of certain non-symmetric differential operators; see \cite{tre1,tre2}.
\begin{definition} 
Let $A \in \CL(\HI)$ and $\e>0$, the $\e$-pseudospectrum of $A$ is defined by
\[
\Lambda_{\epsilon}(A)= \sigma(A) \cup \left\{\lambda \in \C: \left\|(\lambda I-A)^{-1}\right\| \geq \e^{-1}\right\}. 
\]
\end{definition}
Pseudospectra has been extensively studied due to its applications in numerical analysis and differential equations. It is used to study the norm behavior of $A^n$ ($n= 1,2,\ldots$) and $e^{tA}$ ($t>0$), analyze iterative methods for the solutions of $Ax=b$, and in stability analysis of the method of lines discretizations of time-dependent partial differential equations; see \cite{reddy, tre1, tre3}. Over the periods, pseudospectrum has been studied for various differential operators like wave, convection-diffusion, and Orr-Sommerfield operators; see \cite{dri, scrc, scro}. Pseudospectra provides more stable information about non-self-adjoint operators under various limiting procedures than the spectrum; see \cite{davis}. Later on, Hansen introduced the more general $(n,\e)$-pseudospectra for linear operators on separable Hilbert spaces and pointed out that they have numerous attractive features with pseudospectra but offer a better insight into the approximation of the spectrum; see \cite{han1,han2}. The $(n,\e)$-pseudospectrum of bounded linear operator pencils is studied in \cite{kk}.  

The article is organized as follows. Section 1 illustrates the formulation of the one-dimensional heat equation into a closed linear operator pencil. The suitable substitution and the separation of variables transform the one-dimensional heat equation to a $2\times 2$ unbounded block operator matrix pencil. Section 2 studies the $(n,\e)$-pseudospectra of closed linear operator pencil. Several important properties of the $(n,\e)$-pseudospectrum of a closed linear operator pencil are developed. The equivalent definitions for the $(n,\e)$-pseudospectra of closed linear operator pencil and the characterization of the $(n,\e)$-pseudospectra of self-adjoint operator pencil are done. Section 3 concerns the  $2\times 2$ unbounded block operator matrix pencil. Using the generalized Frobenius-Schur factorization, we furnish the spectral, pseudospectral, and $(n,\e)$-pseudospectral enclosures for $ 2\times 2$ unbounded block operator matrix pencil. Section 4 identifies the heat operator pencil as a non-self-adjoint pencil. The eigenvalue analysis and the pseudospectral enclosure of the heat operator pencil are done. The pseudospectrum of discretized heat equation is plotted to illustrate the results.
\section{$(n,\e)$-pseudopsectra of the closed linear operator pencil}
This section studies the $(n,\e)$-pseudospectrum of a closed linear operator pencil for a general purpose. Let $\mathbb{Z}_{+}$ denote the set of all positive integers. For $z\in \C$ and $r>0$, $D(z,r):= \{\lambda: |\lambda-z|\leq r\}$ and $D(0,r)$ is denoted as $\Delta_r$. For $\Omega\subseteq \C$, $\overline{\Omega}:= \{\overline{\lambda}: \lambda\in \Omega\}$.
\begin{definition}
Let $A,B \in \CL(\HI)$, $n\in \mathbb{Z}_{+}\cup \{0\}$, and $\e >0$. The $(n,\e)$-pseudospectrum of $(A,B)$ is denoted by $\Lambda_{n,\epsilon}(A,B)$ 
and is defined by
\[ 
 \Lambda_{n,\epsilon}(A,B)= \sigma(A,B) \cup \left\{\lambda \in \C: \left\|(\lambda B-A)^{-2^n} \right\|^{\frac{1}{2^n}} \geq \e^{-1}\right\}.
 \]
The generalized $(n,\epsilon)$-pseudoresolvent of $(A,B)$ is denoted by $\rho_{n,\epsilon}(A,B)$ and is defined by 
\[
 \rho_{n,\epsilon}(A,B)= \rho (A,B) \cap \left\{\lambda \in \C: \left\|(\lambda B -A)^{-2^n} \right\|^{\frac{1}{2^n}} <  \e^{-1}\right\}.
\]
\end{definition}
The following observations about the pseudospectra of closed linear operator pencil are trivial.
\begin{remark}
Let $A,B \in \CL(\HI)$, $n\in \mathbb{Z}_{+}\cup \{0\}$, and $\e >0$. Then
\begin{enumerate}
\item $\sigma (A,B)\subseteq \Lambda_{n,\epsilon}(A,B)$.
\item If $ B=I$, then $\Lambda_{n,\epsilon}(A,I)=  \Lambda_{n,\epsilon}(A)$.
\item If $n=0$, then $\Lambda_{n,\epsilon}(A,B)=  \Lambda_{\epsilon}(A,B)$.
\end{enumerate}
\end{remark}
\begin{lemma}\label{lem1}
Let $A, B\in \CL(\HI)$ and $\e>0$. Then $\sigma(A+C,B)\subseteq \Lambda_{\e}(A,B)$ for every $C\in \BL(\HI)$ with $\|C\|\leq \e$.
\end{lemma}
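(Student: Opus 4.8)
The plan is to prove the inclusion by contraposition, showing that $\rho_{\e}(A,B) \subseteq \rho(A+C,B)$; equivalently, that every $\lambda$ lying outside $\Lambda_{\e}(A,B)$ belongs to the generalized resolvent set of the perturbed pencil $(A+C,B)$. Reading $\Lambda_{\e}(A,B)$ as the $n=0$ instance $\Lambda_{0,\e}(A,B)$ in the sense of the third item of the preceding remark, the condition $\lambda \notin \Lambda_{\e}(A,B)$ is exactly $\lambda \in \rho_{\e}(A,B)$, which furnishes two facts simultaneously: first, $\lambda \in \rho(A,B)$, so that $(\lambda B - A)^{-1} \in \BL(\HI)$; and second, the strict bound $\bigl\|(\lambda B - A)^{-1}\bigr\| < \e^{-1}$.

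The heart of the argument is a Neumann-series perturbation. On the domain $D(A) \cap D(B)$ I would factor
\[
\lambda B - (A+C) = (\lambda B - A) - C = (\lambda B - A)\bigl[I - (\lambda B - A)^{-1} C\bigr],
\]
noting that $C \in \BL(\HI)$ leaves $D(A+C) = D(A)$ unchanged. The hypotheses combine to give
\[
\bigl\|(\lambda B - A)^{-1} C\bigr\| \leq \bigl\|(\lambda B - A)^{-1}\bigr\|\,\|C\| < \e^{-1}\cdot \e = 1,
\]
so the bounded operator $I - (\lambda B - A)^{-1} C$ is invertible on $\HI$ with inverse $\sum_{k=0}^{\infty}\bigl[(\lambda B - A)^{-1} C\bigr]^{k} \in \BL(\HI)$. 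Composing with $(\lambda B - A)^{-1}$ then yields the candidate bounded inverse
\[
\bigl[\lambda B - (A+C)\bigr]^{-1} = \bigl[I - (\lambda B - A)^{-1} C\bigr]^{-1}(\lambda B - A)^{-1},
\]
which would place $\lambda$ in $\rho(A+C,B)$ and complete the contrapositive.

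I expect the main obstacle to be domain-theoretic rather than estimate-driven, precisely because $A$ and $B$ are unbounded closed operators and the factored inverse is a product of bounded maps whose range must be shown to land back inside $D(A) \cap D(B)$. Concretely, given $y \in \HI$ and setting $x = \bigl[I - (\lambda B - A)^{-1}C\bigr]^{-1}(\lambda B - A)^{-1} y$, one must verify that $x \in D(A)\cap D(B)$ before applying $\lambda B - A$ to it. This I would handle through the fixed-point identity $x = (\lambda B - A)^{-1} y + (\lambda B - A)^{-1} C x$: both summands lie in the range of $(\lambda B - A)^{-1}$, hence in $D(A) \cap D(B)$, so $x$ does too, after which applying $\lambda B - A$ recovers $[\lambda B - (A+C)]x = y$ legitimately. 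Uniqueness follows by reversing these steps and invoking injectivity of $I - (\lambda B - A)^{-1} C$, confirming that the displayed formula is a genuine two-sided bounded inverse.
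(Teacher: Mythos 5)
Your proof is correct and is essentially the paper's own argument run in the contrapositive direction: the paper assumes $\lambda\in\sigma(A+C,B)\setminus\sigma(A,B)$ and uses the same factorization $\lambda B-A-C=(\lambda B-A)\bigl(I-(\lambda B-A)^{-1}C\bigr)$ together with the Neumann-series fact to force $\|(\lambda B-A)^{-1}\|\geq\e^{-1}$, whereas you assume $\lambda\notin\Lambda_{\e}(A,B)$ and invert the same factorization. Your additional verification that the candidate inverse maps back into $D(A)\cap D(B)$ is a careful touch the paper omits, but it does not change the route.
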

\begin{proof}
Suppose $\lambda\in \sigma(A+C,B)$ and $\lambda \notin \sigma(A,B)$, then 
\[\lambda B- A-C= \left(\lambda B- A\right)\left(I-\left(\lambda B- A\right)^{-1}C \right).
\]
It follows that 
\[
1\leq \left\|\left(\lambda B- A\right)^{-1}C  \right\| \leq \left\|\left(\lambda B- A\right)^{-1}\right\| \left\|C \right\|.
\]
Hence $\lambda\in \Lambda_\e(A,B)$ for $\|C\|\leq \e$.
\end{proof}
\begin{theorem}\label{2}
Let $A, B\in \CL(\HI)$, $n \in \mathbb{Z}_{+} \cup \{0\}$, and $\e> 0$. Then the following holds.
\begin{enumerate}
\item $\Lambda_{n+1,\epsilon}(A,B) \subseteq  \Lambda_{n,\epsilon}(A,B)$.
\item $\Lambda_{n,\epsilon_1}(A,B) \subseteq  \Lambda_{n,\epsilon_2}(A,B)$  for every $0< \epsilon_1\leq \epsilon_2$.
\item $\displaystyle \sigma(A,B)= \bigcap_{\epsilon >0}\Lambda_{n,\epsilon}(A,B)$.
\item $\Lambda_{n,\e}(A,B)+ \Delta_\delta \subseteq \Lambda_\e(A,B)+\Delta_\delta\subseteq \Lambda_{\e+\delta\|B\|}(A,B)$ for every $B\in \BL(\HI)$ and $\delta>0$.
\item $\Lambda_{n,\epsilon} ( \alpha A,\alpha B)=  \Lambda_{n,\frac{\epsilon}{|\alpha|}} (A,B) $ for every $\alpha \neq 0$.
\item $\Lambda_{n,\epsilon}(\beta A+\alpha B, B)= \alpha+ \beta \, \Lambda_{n,\frac{\epsilon}{|\beta|}}(A,B)$ for every $\alpha, \beta \in \C$ with $\beta\neq 0.$
\end{enumerate}
\end{theorem}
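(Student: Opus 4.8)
The plan is to dispatch the six items in order of difficulty. Items (2), (3), (5), and (6) are essentially bookkeeping with the definition; item (1) rests on a single operator-norm inequality and also supplies the first inclusion of (4); the substantive point is the second inclusion of (4). Throughout I abbreviate $R(\lambda)=(\lambda B-A)^{-1}$ on $\rho(A,B)$.

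For (1) I would apply submultiplicativity to $T=R(\lambda)$: since $T^{2^{n+1}}=(T^{2^n})^2$ we get $\|T^{2^{n+1}}\|\le\|T^{2^n}\|^2$, and taking the $2^{n+1}$-th root gives $\|R(\lambda)^{2^{n+1}}\|^{1/2^{n+1}}\le\|R(\lambda)^{2^{n}}\|^{1/2^{n}}$. Hence the defining inequality at level $n+1$ forces the one at level $n$, and since $\sigma(A,B)$ sits in every $\Lambda_{n,\e}(A,B)$ by the Remark, the inclusion follows. Iterating (1) down to the base case $n=0$, where $\Lambda_{0,\e}(A,B)=\Lambda_{\e}(A,B)$ by the Remark, yields $\Lambda_{n,\e}(A,B)\subseteq\Lambda_{\e}(A,B)$; adding $\Delta_\delta$ to both sides gives the first inclusion in (4).

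Items (2) and (3) are immediate. For (2), $0<\e_1\le\e_2$ gives $\e_1^{-1}\ge\e_2^{-1}$, so the norm threshold for $\e_1$ is the stronger condition. For (3), the inclusion $\sigma(A,B)\subseteq\bigcap_{\e}\Lambda_{n,\e}(A,B)$ is the Remark, while for the reverse I would note that a point $\lambda\in\rho(A,B)$ has $\|R(\lambda)^{2^n}\|^{1/2^n}$ finite, so it drops out of $\Lambda_{n,\e}(A,B)$ once $\e^{-1}$ exceeds this value, leaving only $\sigma(A,B)$ in the intersection. For (5) and (6) I would use the algebraic identities $(\lambda(\alpha B)-\alpha A)^{-1}=\alpha^{-1}R(\lambda)$ and $(\lambda B-(\beta A+\alpha B))^{-1}=\beta^{-1}((\tfrac{\lambda-\alpha}{\beta})B-A)^{-1}$; passing to $2^n$-th powers and the $2^n$-th root extracts the factor $|\alpha|^{-1}$, respectively $|\beta|^{-1}$, which converts the threshold $\e^{-1}$ into $(\e/|\alpha|)^{-1}$, respectively $(\e/|\beta|)^{-1}$, while the spectra transform by the same affine map $\nu\mapsto\alpha+\beta\nu$. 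Matching the two descriptions yields the stated equalities.

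The hard part will be the second inclusion in (4), and here I would argue by contraposition, using crucially that $B\in\BL(\HI)$. Take $\mu=\lambda+z$ with $|z|\le\delta$ and suppose $\mu\notin\Lambda_{\e+\delta\|B\|}(A,B)$, that is, $\mu\in\rho(A,B)$ with $\|R(\mu)\|<(\e+\delta\|B\|)^{-1}$; the goal is to deduce $\lambda\notin\Lambda_{\e}(A,B)$. Factoring $\lambda B-A=(\mu B-A)\bigl(I-zR(\mu)B\bigr)$ and noting $\|zR(\mu)B\|\le\delta\|B\|\,\|R(\mu)\|<1$, a Neumann series shows $\lambda\in\rho(A,B)$ with $\|R(\lambda)\|\le\|R(\mu)\|\bigl(1-\delta\|B\|\,\|R(\mu)\|\bigr)^{-1}$. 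A direct manipulation then shows this bound is $<\e^{-1}$ precisely when $\|R(\mu)\|<(\e+\delta\|B\|)^{-1}$, which is our hypothesis, so $\lambda\notin\Lambda_{\e}(A,B)$, completing the contrapositive. The delicate points are ensuring the strict inequality survives the Neumann estimate and recognizing that the boundedness of $B$ is exactly what makes $zR(\mu)B$ small; I would also remark that an alternative route through the minimum-modulus identity $\|R(\mu)\|^{-1}=\inf_{\|\psi\|=1}\|(\mu B-A)\psi\|$ is available but is more delicate to justify in the unbounded setting, so I prefer the Neumann series argument.
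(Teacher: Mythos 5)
Your treatment of items (1), (2), (3), (5), and (6) coincides with the paper's proof: submultiplicativity $\|T^{2^{n+1}}\|\leq\|T^{2^n}\|^{2}$ for (1), monotonicity of the threshold for (2), finiteness of $\left\|(\lambda B-A)^{-2^n}\right\|$ at points of $\rho(A,B)$ for (3), and the scaling identity $(\lambda\alpha B-\alpha A)^{-1}=\alpha^{-1}(\lambda B-A)^{-1}$ together with the affine identity $\lambda B-(\beta A+\alpha B)=\beta\bigl(\tfrac{\lambda-\alpha}{\beta}B-A\bigr)$ for (5) and (6); likewise both you and the paper obtain the first inclusion of (4) from (1). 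The genuine divergence is the second inclusion of (4). The paper argues forward in three cases: for $\lambda\in\sigma(A,B)$ it applies Lemma \ref{lem1} with $C=\mu B$ to get $\lambda+\mu\in\sigma(A+\mu B,B)\subseteq\Lambda_{\delta\|B\|}(A,B)$; for $\lambda+\mu\in\sigma(A,B)$ the claim is trivial; otherwise it takes a unit vector $x\in D(A)$ with $\|(\lambda B-A)x\|\leq\e$ and estimates $1\leq(\e+\delta\|B\|)\left\|\left((\lambda+\mu)B-A\right)^{-1}\right\|$. Your contrapositive Neumann-series argument is correct and genuinely different: writing $r=\|R(\mu)\|$ and $b=\delta\|B\|$, the factorization $\lambda B-A=(\mu B-A)(I-zR(\mu)B)$ with $\|zR(\mu)B\|\leq br<1$ gives $\|R(\lambda)\|\leq r/(1-br)$, and $r/(1-br)<\e^{-1}$ holds precisely when $r<(\e+b)^{-1}$, which is your hypothesis, so the strict inequality does survive. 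Your route buys two things: it needs no case analysis (invertibility of $\lambda B-A$ is produced by the factorization rather than assumed or excluded), and it sidesteps an attainment subtlety in the paper's argument, since $\|(\lambda B-A)^{-1}\|\geq\e^{-1}$ alone only yields unit vectors with $\|(\lambda B-A)x\|\leq\e+\eta$ for arbitrary $\eta>0$ unless one invokes the equivalence of Theorem \ref{eq}. The paper's route, in turn, makes the perturbation mechanism explicit and reusable through Lemma \ref{lem1}. One sentence you should add when writing yours up: because $A$ is unbounded, check that the bounded operator $(I-zR(\mu)B)^{-1}R(\mu)$ really is a two-sided inverse of $\lambda B-A$ on $D(A)$; this is immediate since any $x$ with $x=R(\mu)(y+zBx)$ lies in $\mathrm{ran}\,R(\mu)\subseteq D(A)$, but it is the one point where domains matter.
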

\begin{proof}
\begin{enumerate}
\item Suppose $\lambda\in \Lambda_{n+1,\epsilon}(A,B)\setminus \sigma(A,B)$. Then
\begin{align*}
\e^{-1}\leq \left\|[(\lambda B-A)^{-1}]^{2^{n+1}}\right\|^{\frac{1}{2^{n+1}}} 
&\leq \left\|[(\lambda B-A)^{-1}]^{2^n} \right\|^{\frac{1}{2^{n+1}}} \left\|[(\lambda B-A)^{-1}]^{2^n} \right\|^{\frac{1}{2^{n+1}}}\\
&= \left\|[(\lambda B-A)^{-1}]^{2^n} \right\|^{\frac{1}{2^n}}. 
\end{align*}
Hence $\lambda\in \Lambda_{n,\epsilon}(A,B)\setminus \sigma(A,B)$.
\item Let $0<\e_1\leq \e_2$ and $\lambda \in \Lambda_{n,\epsilon_1}(A,B)$. If $\lambda\in \sigma(A,B)$, then $\lambda\in \Lambda_{n,\epsilon_2}(A,B)$. If $\lambda \in \Lambda_{n,\epsilon_1}(A,B)\setminus \sigma(A,B)$, then
\[
 \left\|(\lambda B-A)^{-2^n} \right\|^{\frac{1}{2^n}} \geq \epsilon_1^{-1} \geq\epsilon_2^{-1}. 
\]
Hence $\lambda \in \Lambda_{n,\epsilon_2}(A,B)$.
\item If $\lambda\in \sigma(A,B)$, then $\lambda\in \Lambda_{n,\e}(A,B)$ for every $\e>0$ and $\displaystyle \lambda\in \bigcap_{\e>0}\Lambda_{n,\e}(A,B)$. Next assume that $\displaystyle \lambda\in \bigcap_{\e>0}\Lambda_{n,\e}(A,B)$ and $\lambda\notin \sigma(A,B)$, then 
\[
\left\|(\lambda B -A)^{-2^n} \right\|^{\frac{1}{2^n}} \geq \epsilon^{-1} \ \ \ \  \text{for every} \ \epsilon > 0.
\]
Hence $\left\|(\lambda B-A)^{-2^n}\right\|= 0$, a contradiction.
\item From (1), $\Lambda_{n,\e}(A,B) \subseteq\Lambda_{\e}(A,B)$ and hence $\Lambda_{n,\e}(A,B)+\Delta_{\delta} \subseteq\Lambda_{\e}(A,B)+\Delta_{\delta}$. Let $\lambda\in \Lambda_{\e}(A,B)$, $\mu\in \C$, and $|\mu|\leq \delta$. If $\lambda\in \sigma(A,B)$, from Lemma \ref{lem1},
\[
\lambda+\mu\in \sigma(A+\mu B,B)\subseteq \Lambda_{\delta\|B\|}(A,B) \subseteq \Lambda_{\e+ \delta\|B\|}(A,B).
\]
If $\lambda\in \Lambda_{\e}(A,B)\setminus \sigma(A,B)$ and $\lambda+\mu\in \sigma(A,B)$, then $\lambda+\mu\in \Lambda_{\e+\delta\|B\|}(A,B)$. Further if $\lambda\in \Lambda_{\e}(A,B)\setminus \sigma(A,B)$ and $(\lambda+\mu)B-A$ is invertible. Choose $x\in D(A)$ with $\|x\|= 1$ such that $\|(\lambda B-A)x\|\leq \e$, then
\begin{align*}
1=\|x\|&=\left\| \left(\left(\lambda+\mu\right) B- A\right)^{-1} \left(\left(\lambda+\mu\right) B- A\right)x\right\|\\
&\leq \left(\e+ \delta\|B\|\right)\left\| \left(\left(\lambda+\mu\right) B- A\right)^{-1}\right\|.
\end{align*}
Hence $\lambda+\mu\in \Lambda_{\e+ \delta\|B\|}(A,B)$.
\item Let $\alpha\neq 0$, then $\sigma(\alpha A,\alpha B)= \sigma(A,B)$. Also
\begin{align*}
\lambda \in \Lambda_{n,\epsilon}(\alpha A,\alpha B)\setminus \sigma(\alpha A, \alpha B)& \Longleftrightarrow \left\|(\lambda \alpha B - \alpha A)^{-2^n} \right\|^{\frac{1}{2^n}} \geq \e^{-1}\\
 & \Longleftrightarrow \left\|(\lambda B- A)^{-2^n} \right\|^{\frac{1}{2^n}} \geq \frac{|\alpha|}{\epsilon}\\
  & \Longleftrightarrow \lambda \in \Lambda_{n,\frac{\epsilon}{|\alpha|}} (A,B) \setminus \sigma(A,B). 
\end{align*}
 \item Let $\alpha,\beta\in \C $ and $\beta\neq 0$, then 
 \begin{align*}
  \rho (\beta A+\alpha B,B) &= \{\lambda \in \mathbb{C}:\left(\lambda B-\beta A -\alpha B \right)^{-1}\in \BL(\HI)\}\\
  &= \left\{\lambda \in \mathbb{C}:\left( \frac{\lambda-\alpha}{\beta}B -A\right)^{-1} \in \BL(\HI)\right\}.
 \end{align*}
 i.e., $\lambda\in \sigma(\beta A+\alpha B,B) \Longleftrightarrow \frac{\lambda-\alpha}{\beta} \in \sigma(A,B)
       \Longleftrightarrow  \lambda \in \alpha+ \beta \, \sigma(A,B).$ Also
 \begin{align*}
\lambda \in \Lambda_{n,\epsilon}(\beta A + \alpha B,B)\setminus \sigma(\beta A+\alpha B,B) & \Longleftrightarrow \left\|(\lambda B-\beta A -\alpha B)^{-2^n}\right\|^{\frac{1}{2^n}} \geq \e^{-1}\\
& \Longleftrightarrow \ \frac{1}{|\beta|} \left\|\left(\frac{\lambda -\alpha}{\beta} B -A\right)^{-2^n}\right\|^{\frac{1}{2^n}} \geq \e^{-1}\\
& \Longleftrightarrow \  \frac{\lambda -\alpha}{\beta} \in \Lambda_{n,\frac{\epsilon}{|\beta|}} (A,B) \setminus \sigma(A,B)\\
& \Longleftrightarrow \  \lambda \in \alpha+ \beta \, \Lambda_{n,\frac{\epsilon}{|\beta|}}(A,B) \setminus \sigma(A,B).
\end{align*}
\end{enumerate}
\end{proof}
The following theorem presents some equivalent definitions for $(n,\epsilon)$-pseudospectra of closed linear operator pencil.
\begin{theorem}\label{eq}
Let $A, B\in \CL(\HI)$, $n\in \mathbb{Z}_{+} \cup \{0\}$, and $\epsilon> 0$. Then the following are equivalent.
\begin{enumerate}[(1)]
\item $z\in \Lambda_{n,\epsilon} (A,B)$.
\item $z\in \sigma(A,B) \cup \left\lbrace \lambda \in \mathbb{C}: \,\left\|(\lambda B-A)^{2^n}u\right\|\leq {\epsilon}^{2^n}, u\in D(A)\cap D(B),\|u\|= 1\right\rbrace .$
\item $z\in \sigma(A,B) \cup \Big\{\lambda\in \mathbb{C}: \left[(\lambda B-A)^{2^n}- E\right]u= 0, \|E\|\leq {\epsilon}^{2^n}, u\in  D(A)\cap D(B), \|u\|= 1 \Big\}.$
\end{enumerate}
\end{theorem}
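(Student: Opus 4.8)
The plan is to exploit that $\sigma(A,B)$ occurs as a summand in all three conditions, so it suffices to prove the equivalences on $\rho(A,B)$. Accordingly I fix $z\in\rho(A,B)$ and abbreviate $M:=(zB-A)^{2^n}$. Since $z\in\rho(A,B)$ gives $(zB-A)^{-1}\in\BL(\HI)$, the operator $(zB-A)^{-2^n}=\big((zB-A)^{-1}\big)^{2^n}$ is bounded and everywhere defined, and $M$ is exactly its closed inverse, with dense domain $D(M)$ on which the expression $(zB-A)^{2^n}u$ appearing in (2) is meaningful; for $n=0$ this domain reduces to $D(A)\cap D(B)$.

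First I would establish (1)$\Leftrightarrow$(2). The crux is the norm--infimum identity for a closed operator with bounded inverse,
\[
\big\|M^{-1}\big\|^{-1}=\inf_{u\in D(M),\,\|u\|=1}\|Mu\|,
\]
which I would obtain by writing $\|M^{-1}\|=\sup_{v\neq0}\|M^{-1}v\|/\|v\|$ and substituting $v=Mu$; this substitution is legitimate because $M^{-1}$ maps $\HI$ bijectively onto $D(M)$, so $v=Mu$ ranges over all of $\HI$ as $u$ runs through $D(M)$. Raising the defining inequality $\|(zB-A)^{-2^n}\|^{1/2^n}\geq\e^{-1}$ to the $2^n$-th power recasts (1) as $\|M^{-1}\|\geq\e^{-2^n}$, and the identity converts this into $\inf_{\|u\|=1}\|Mu\|\leq\e^{2^n}$, which is condition (2).

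Next I would dispose of (2)$\Leftrightarrow$(3) by a rank-one perturbation. For (2)$\Rightarrow$(3), given a unit vector $u$ with $\|Mu\|\leq\e^{2^n}$, set $Ev:=\langle v,u\rangle\,Mu$; then $Eu=Mu$, so $(M-E)u=0$, while $\|E\|=\|Mu\|\leq\e^{2^n}$. Conversely, if $(M-E)u=0$ with $\|u\|=1$ and $\|E\|\leq\e^{2^n}$, then $\|Mu\|=\|Eu\|\leq\|E\|\leq\e^{2^n}$, which returns us to (2).

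I expect the main obstacle to be the careful handling of the norm--infimum duality for the unbounded $M$: one must verify that $M^{-1}$ is genuinely bounded and surjective onto $D(M)$ so that the substitution $v=Mu$ is valid, and one must be attentive to the boundary case in which $\inf_{\|u\|=1}\|Mu\|=\e^{2^n}$ is not attained. There the nonstrict inequality in the definition of $\Lambda_{n,\e}(A,B)$ keeps (1) in force, so the equivalence is best understood with (2) read as the infimum bound $\inf_{\|u\|=1}\|Mu\|\leq\e^{2^n}$; with this reading the nonstrict thresholds align throughout, and the same bound lets the datum $E$ in (3) be produced from a (near-)minimizing unit vector.
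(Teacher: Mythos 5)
Your proposal is correct and takes essentially the same route as the paper: your norm--infimum identity underlying (1)$\Leftrightarrow$(2) is exactly the paper's substitution $v=(zB-A)^{-2^n}x$, $u=v/\|v\|$, and your rank-one operator $Ev=\langle v,u\rangle Mu$ is the paper's $E(x)=\phi(x)(zB-A)^{2^n}u$ with the norming functional $\phi=\langle\,\cdot\,,u\rangle$, while your (3)$\Rightarrow$(2) step is a trivial rearrangement of the paper's (3)$\Rightarrow$(1). If anything you are more careful than the paper on the boundary case where $\|(zB-A)^{-2^n}\|=\e^{-2^n}$ is not attained: the paper's assertion that some unit vector $x$ satisfies $\|(zB-A)^{-2^n}x\|^{1/2^n}\geq\e^{-1}$ silently assumes attainment of the operator norm, precisely the gap you flag and patch by reading (2) as an infimum bound.
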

\begin{proof}
(1) $\Longrightarrow$ (2). If $z\in \Lambda_{n,\e}(A,B)\setminus \sigma(A,B)$, then $\left\|(z B-A)^{-2^n} \right\|^{\frac{1}{2^n}} \geq \e^{-1}$ also there exists $x\in D(A)\cap D(B)$ with $\|x\|= 1$ such that $\left\|(z B-A)^{-2^n}x\right\|^{\frac{1}{2^n}} \geq \e^{-1}$. Define $v= (z B-A)^{-2^n}x$ and $\displaystyle u= \frac{v}{\|v\|}$. Then $v\neq 0$ and
\[
\left\|(zB-A)^{2^n}\frac{v}{\|v\|}\right\|= \frac{\|x\|}{\left\|(z B-A)^{-2^n}x\right\|}\leq \e^{2^n}.
\]
(2) $\Longrightarrow$ (3). Suppose $\left\|(z B-A)^{2^n}u \right\|\leq {\epsilon}^{2^n}$ for some $u \in D(A)\cap D(B)$ with $\|u\|=1 $. Then there exists $\phi \in \HI'$ such that $\|\phi\|= 1$ and $\phi(u)= \|u\|= 1$. Define rank one operator $E: \HI\rightarrow \HI$
\[
 E(x)= \phi(x) (z B-A)^{2^n}u.
\]
Then $\|E\|\leq {\epsilon}^{2^n} $ and $\left[(z B-A)^{2^n}- E\right]u= 0$.\\
(3) $\Longrightarrow$ (1). Suppose $\left[(z B - A) ^{2^n}-E\right]u= 0$ for some $E \in \BL(\HI)$ with $\|E\|\leq {\epsilon}^{2^n}$ and $u\in D(A) \cap D(B)$ with $\|u\|= 1$. For $z \notin \sigma(A,B)$, we have $u= (z B -A)^{-2^n}Eu$, also
\begin{align*}
 1= \|u\|= \left\|(z B-A)^{-2^n}Eu\right\| &\leq \left\|(z B -A)^{-2^n}\right\| {\epsilon}^{2^n}.
 \end{align*}
Thus $z \in \Lambda_{n,\epsilon}(A,B).$ 
\end{proof}
\begin{theorem}\label{rethm}
Let $A, B \in \CL(\HI)$ and $B(\lambda_0 B-A)^{-1}$ is bounded for some $\lambda_0\in \rho(A,B)$. Then
\[
(\lambda B-A)^{-1}= (\lambda_0 B-A)^{-1} \sum_{m=0}^{\infty} \left(\lambda-\lambda_0\right)^m \left[B(\lambda_0 B-A)^{-1}\right]^m,
\]
for every $\lambda$ with $\displaystyle \left|\lambda-\lambda_0 \right|< \frac{1}{\left\|B (\lambda_0 B-A)^{-1}\right\|}$.
\end{theorem}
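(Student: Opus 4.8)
The plan is to exhibit $\lambda B-A$ as a bounded, invertible perturbation of $\lambda_0 B-A$ and then to invert the perturbation factor by a Neumann series. I would begin from the additive splitting $\lambda B-A=(\lambda_0 B-A)+(\lambda-\lambda_0)B$, which holds on the common domain $D(A)\cap D(B)$ on which the pencil is defined. Writing $R_0:=(\lambda_0 B-A)^{-1}$, which is bounded and maps $\HI$ onto $D(A)\cap D(B)$ because $\lambda_0\in\rho(A,B)$, and using $R_0(\lambda_0 B-A)=I$ on that domain, I would factor $\lambda_0 B-A$ out on the right to obtain
\[
\lambda B-A=\big(I+(\lambda-\lambda_0)\,B R_0\big)(\lambda_0 B-A).
\]
The whole point of the hypothesis is that $T:=BR_0=B(\lambda_0 B-A)^{-1}$ is \emph{bounded} on all of $\HI$, so the bracketed factor is an honest element of $\BL(\HI)$ rather than a merely densely defined operator.

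With the factorization in hand, the second step is routine Neumann-series inversion. For $|\lambda-\lambda_0|<\|T\|^{-1}$ one has $\|(\lambda-\lambda_0)T\|<1$, so $I+(\lambda-\lambda_0)T$ is invertible in $\BL(\HI)$ with inverse given by the norm-convergent Neumann series for $\big(I+(\lambda-\lambda_0)T\big)^{-1}$. Composing this bounded inverse on the left with $R_0$ and collecting the powers of $T$ reproduces the geometric series in $T=B(\lambda_0 B-A)^{-1}$ recorded in the statement; since $R_0$ is bounded and the series converges in operator norm, the resulting operator is bounded. Multiplying it by $\lambda B-A$ on either side and simplifying through the factorization returns the identity, and because $\lambda B-A$ is closed this bounded two-sided inverse certifies $\lambda\in\rho(A,B)$ and equals $(\lambda B-A)^{-1}$, so the expansion is valid throughout the stated disc.

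The step that demands care — and the one I expect to be the main obstacle — is the domain bookkeeping forced by the unboundedness of $A$ and $B$. The factorization above is an identity between unbounded operators and is only meaningful on $D(A)\cap D(B)$; one must check that $BR_0$ is genuinely defined and bounded on all of $\HI$ (this is exactly the standing hypothesis, since $R_0$ lands in $D(B)$), and that composing the bounded Neumann factor with $R_0$ yields an operator whose range lies back in $D(A)\cap D(B)$, so that it can serve as a two-sided inverse. Verifying that the one-sided algebraic cancellation is in fact two-sided on the correct domain, rather than on a proper subspace, is the delicate point; once the boundedness of $T$ is used to keep every intermediate operator inside $\BL(\HI)$, the convergence of the series and the resolvent identity follow without difficulty.
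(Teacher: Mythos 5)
Your proposal follows essentially the same route as the paper's proof: split $\lambda B-A=(\lambda_0B-A)+(\lambda-\lambda_0)B$, factor the invertible term out on the right, and invert the remaining bounded factor by a Neumann series. In two respects your write-up is tighter than the paper's: the paper opens with ``let $\lambda\in\rho(A,B)$'', i.e.\ it assumes the invertibility that the expansion is supposed to establish, whereas you correctly note that the bounded two-sided inverse you construct is what certifies $\lambda\in\rho(A,B)$; and the paper says nothing about the domain bookkeeping you discuss.

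However, there is one concrete discrepancy that you assert away rather than check. From your (correct) factorization $\lambda B-A=\bigl(I+(\lambda-\lambda_0)BR_0\bigr)(\lambda_0B-A)$, with $R_0=(\lambda_0B-A)^{-1}$, the Neumann series gives
\[
(\lambda B-A)^{-1}=R_0\sum_{m=0}^{\infty}(-1)^m(\lambda-\lambda_0)^m\left[BR_0\right]^m
=R_0\sum_{m=0}^{\infty}(\lambda_0-\lambda)^m\left[B(\lambda_0B-A)^{-1}\right]^m,
\]
with alternating signs, which is \emph{not} the series ``recorded in the statement''. The stated formula is off by exactly this factor of $(-1)^m$; a scalar test makes this plain: taking $B=1$, $A=a\neq0$, $\lambda_0=0$, the stated series sums to $-(\lambda+a)^{-1}$, not $(\lambda-a)^{-1}$. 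The paper's own proof commits the same slip: in passing from $-(A-\lambda_0B)^{-1}\bigl[I-(\lambda-\lambda_0)B(A-\lambda_0B)^{-1}\bigr]^{-1}$ to $(\lambda_0B-A)^{-1}\bigl[I-(\lambda-\lambda_0)B(\lambda_0B-A)^{-1}\bigr]^{-1}$, it drops the sign change in $B(A-\lambda_0B)^{-1}=-B(\lambda_0B-A)^{-1}$. So your argument is sound, but its correct conclusion contradicts the theorem as printed; you should carry the $(-1)^m$ (equivalently, write $(\lambda_0-\lambda)^m$) and flag the sign error in the statement, rather than claim that the series match.
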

\begin{proof}
Let $\lambda\in \rho(A,B)$, then
\begin{align*}
(\lambda B-A)^{-1} &= -\left[\left(A- \lambda_0 B\right)- \left(\lambda- \lambda_0 \right)B\right]^{-1}\\
&= -\left(A- \lambda_0 B\right)^{-1}\left[I-\left(\lambda- \lambda_0 \right)B\left(A- \lambda_0 B\right)^{-1} \right]^{-1}\\
&= (\lambda_0 B-A)^{-1}\left[I-\left(\lambda- \lambda_0 \right)B(\lambda_0B-A)^{-1}\right]^{-1}.
\end{align*}
If $\displaystyle\left|\lambda-\lambda_0\right| < \frac{1}{\left\|B (\lambda_0B-A)^{-1}\right\|}$, then
\[
(\lambda B-A)^{-1}= (\lambda_0B-A)^{-1} \sum_{m=0}^{\infty}\left(\lambda-\lambda_0\right)^m \left[B(\lambda_0B-A)^{-1}\right]^m.
\]
\end{proof}
\begin{lemma}\label{ana}
Let $A, B\in \CL(\HI)$ and $B(\lambda B-A)^{-1}$ is bounded for every $\lambda\in \rho(A,B)$. Then the resolvent function of the operator pencil $(A, B)$ is analytic on $\rho(A, B)$.
\end{lemma}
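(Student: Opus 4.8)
The plan is to read off analyticity directly from the local power series supplied by Theorem \ref{rethm}. Recall that a $\BL(\HI)$-valued map is analytic on an open set exactly when, about each point of that set, it is represented by a norm-convergent power series whose coefficients lie in $\BL(\HI)$. Thus the whole task reduces to verifying that the expansion furnished by Theorem \ref{rethm} has this form at every $\lambda_0\in\rho(A,B)$.

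First I would fix an arbitrary $\lambda_0\in\rho(A,B)$ and write $r=\|B(\lambda_0 B-A)^{-1}\|$, which is finite by hypothesis. Theorem \ref{rethm} then yields, for all $\lambda$ with $|\lambda-\lambda_0|<r^{-1}$, the representation
\[
(\lambda B-A)^{-1}=\sum_{m=0}^{\infty}(\lambda-\lambda_0)^m\,c_m,\qquad c_m:=(\lambda_0 B-A)^{-1}\big[B(\lambda_0 B-A)^{-1}\big]^m.
\]
Each coefficient $c_m$ is a bounded operator, being a product of the bounded operators $(\lambda_0 B-A)^{-1}$ and $B(\lambda_0 B-A)^{-1}$, and it satisfies $\|c_m\|\le\|(\lambda_0 B-A)^{-1}\|\,r^{m}$. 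Hence the series is dominated by a convergent geometric series on the disk $|\lambda-\lambda_0|<r^{-1}$ and converges there in operator norm.

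Next I would observe that this norm-convergent representation does double duty. On one hand, since the right-hand side is a bounded operator for each such $\lambda$, the open disk $|\lambda-\lambda_0|<r^{-1}$ is contained in $\rho(A,B)$, so $\rho(A,B)$ is open and it is meaningful to discuss analyticity on it. On the other hand, the very existence of a norm-convergent power series about $\lambda_0$ shows that $\lambda\mapsto(\lambda B-A)^{-1}$ is analytic in a neighborhood of $\lambda_0$. As $\lambda_0\in\rho(A,B)$ was arbitrary, the resolvent function is analytic on all of $\rho(A,B)$.

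I do not expect a substantive obstacle here, since Theorem \ref{rethm} does the heavy lifting; the only points needing care are the boundedness of every coefficient $c_m$ (which is precisely where the hypothesis that $B(\lambda B-A)^{-1}$ is bounded enters, applied at $\lambda_0$) and the passage from pointwise to norm convergence via the geometric majorant. Should one wish to avoid the series characterization of analyticity altogether, an equivalent route is to differentiate the expansion term by term and identify $(\lambda_0 B-A)^{-1}B(\lambda_0 B-A)^{-1}$ as the derivative at $\lambda_0$, thereby exhibiting complex differentiability directly.
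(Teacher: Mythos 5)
Your proof is correct, but it takes a genuinely different route from the paper's. You obtain analyticity by exhibiting, around each $\lambda_0\in\rho(A,B)$, the norm-convergent power series of Theorem \ref{rethm} (the lemma's ``for every $\lambda$'' hypothesis supplies, at each point, the ``for some $\lambda_0$'' hypothesis of that theorem), with coefficients in $\BL(\HI)$ dominated by a geometric majorant. The paper instead verifies complex differentiability directly: for $\lambda,\mu\in\rho(A,B)$ it invokes the pencil resolvent identity
\[
(\lambda B-A)^{-1}-(\mu B-A)^{-1}=(\mu-\lambda)\,(\lambda B-A)^{-1}B(\mu B-A)^{-1},
\]
divides by $\lambda-\mu$, and lets $\lambda\to\mu$ to identify the derivative $-(\mu B-A)^{-1}B(\mu B-A)^{-1}$. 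The trade-off: the paper's argument is shorter and produces the explicit derivative formula (the pencil analogue of $-(\lambda I-A)^{-2}$), whereas yours makes explicit two points the paper's proof uses silently, namely that $\rho(A,B)$ is open (so that analyticity on it is meaningful) and that $\lambda\mapsto(\lambda B-A)^{-1}$ is locally bounded and norm continuous at $\mu$, without which the passage to the limit in the difference quotient is not justified; both facts fall out of the series representation you exhibit, so your argument is the more self-contained one given Theorem \ref{rethm}. One caution about your optional closing remark: term-by-term differentiation of the series exactly as stated in Theorem \ref{rethm} gives $+(\lambda_0 B-A)^{-1}B(\lambda_0 B-A)^{-1}$ at $\lambda_0$, which conflicts with the derivative obtained from the resolvent identity; the discrepancy is a sign slip in Theorem \ref{rethm} itself, whose Neumann expansion of $[I+(\lambda-\lambda_0)B(\lambda_0 B-A)^{-1}]^{-1}$ should carry a factor $(-1)^m$. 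This is immaterial for your main argument, since any norm-convergent power series in $\lambda-\lambda_0$ with bounded coefficients yields analyticity, but it matters if you quote the coefficient values.
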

\begin{proof}
Let $\lambda, \mu\in \rho(A,B)$,
\[
(\lambda B-A)^{-1}- (\mu B-A)^{-1}= (\mu-\lambda)(\lambda B-A)^{-1}B(\mu B-A)^{-1}. 
\]
Then
\[
\lim_{\lambda\rightarrow \mu} \frac{(\lambda B-A)^{-1}- (\mu B-A)^{-1}}{\lambda-\mu}= -(\mu B-A)^{-1}B(\mu B-A)^{-1}.
\]
\end{proof}
\begin{theorem}
Let $A, B\in \CL(\HI)$, $n\in \mathbb{Z}_{+} \cup \{0\}$, and $\epsilon> 0$. If $B(\lambda B-A)^{-1}$ is bounded for every $\lambda\in \rho(A,B)$, then 
\begin{enumerate}[(i)]
\item  $\Lambda_{n,\e}(A,B)$ is a closed subset of $\C$.
\item Any bounded component of $\Lambda_{n,\e}(A,B)$ has non empty intersection with $\sigma(A,B)$.
\end{enumerate}
\begin{proof}
\begin{enumerate}[(i)]
\item From Theorem \ref{rethm}, $\sigma(A,B)$ is closed. Define $f: \rho(A,B)\rightarrow [0,\infty)$ by
\[
f(\lambda)= \left\|(\lambda B-A)^{-2^n}\right\|^{\frac{1}{2^n}}.
\]
Then $f$ is continuous and 
\[
\left\{\lambda\in \rho(A,B): \left\|(\lambda B-A)^{-2^n}\right\|^{\frac{1}{2^n}}\geq \e^{-1}\right\}= f^{-1}\left(\left[\e^{-1}, \infty\right)\right),
\]
is closed. Hence $\Lambda_{n,\e}(A,B)$ is closed.
\item Suppose $\Omega$ be a bounded component of $\Lambda_{n,\e}(A,B)$ such that $\Omega\cap \sigma(A,B)= \emptyset$. Then 
\[
\Omega \subseteq \rho(A,B)\cap \left\{\lambda \in \rho(A,B): \left\|(\lambda B-A)^{-2^n} \right\|^{\frac{1}{2^n}} \geq \e^{-1} \right\}.
\]
Let $G= \Omega \cap \left\{\lambda \in \rho(A,B): \left\|(\lambda B-A)^{-2^n} \right\| >\e^{-{2^n}} \right\}$. We claim that $G$ is open. Let $\mu\in G$, since $\mu \in \left\{\lambda \in \rho(A,B): \left\|(\lambda B-A)^{-2^n} \right\| >\e^{-{2^n}} \right\}$, there exists $r_\mu >0$, such that $D(\mu, r_\mu) \subset \left\{\lambda \in \rho(A,B): \left\|(\lambda B-A)^{-2^n} \right\| >\e^{-{2^n}} \right\}.$ Since $ \Omega$ is component, $\mu \in \Omega$ and $D(\mu, r_\mu)$ is connected, $D(\mu, r_\mu)\subset \Omega.$  Hence $D(\mu, r_\mu) \subset G$. Now the map 
\[
\lambda \mapsto \left\|(\lambda B-A)^{-2^n} \right\|,
\]
defined from $G$ to $\C$ is subharmonic and continuous on $\overline{G}$. For any boundary point  $\lambda \in G$, $\left\|(\lambda B-A)^{-2^n} \right\|= \e^{-{2^n}}$. But for $\lambda \in G$, $\left\|(\lambda B-A)^{-2^n} \right\| >\e^{-{2^n}}$. This contradicts the maximum principle.
\end{enumerate}
\end{proof}
\end{theorem}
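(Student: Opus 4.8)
The plan is to treat the two assertions by distinct methods: part~(i) is a continuity argument on the resolvent set, while part~(ii) rests on the subharmonicity of the resolvent norm together with the maximum principle.

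For part~(i), I would first note that $\sigma(A,B)$ is closed. Theorem~\ref{rethm} furnishes, about each $\lambda_0\in\rho(A,B)$, a power-series representation of $(\lambda B-A)^{-1}$ convergent on the disk $|\lambda-\lambda_0|<1/\|B(\lambda_0 B-A)^{-1}\|$, so $\rho(A,B)$ is open and its complement $\sigma(A,B)$ is closed. I would then introduce $f(\lambda)=\|(\lambda B-A)^{-2^n}\|^{1/2^n}$ on $\rho(A,B)$ and verify that $f$ is continuous: by Lemma~\ref{ana} the resolvent $\lambda\mapsto(\lambda B-A)^{-1}$ is analytic, hence norm-continuous, and continuity survives taking the $2^n$-th operator power, the operator norm, and the $2^n$-th root. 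The set $\{\lambda\in\rho(A,B):f(\lambda)\ge\e^{-1}\}=f^{-1}([\e^{-1},\infty))$ is therefore relatively closed in the open set $\rho(A,B)$; its union with the closed set $\sigma(A,B)$, which is precisely $\Lambda_{n,\e}(A,B)$, is closed in $\C$.

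For part~(ii), I would argue by contradiction. Suppose $\Omega$ is a bounded component of $\Lambda_{n,\e}(A,B)$ with $\Omega\cap\sigma(A,B)=\emptyset$. Since $\Omega$ is a component of the closed set $\Lambda_{n,\e}(A,B)$ it is itself closed, and being disjoint from the closed set $\sigma(A,B)$ it lies entirely in $\rho(A,B)$; writing $g(\lambda)=\|(\lambda B-A)^{-2^n}\|$ and $c=\e^{-2^n}$ we have $g\ge c$ throughout $\Omega$. The decisive input is that $g$ is subharmonic on $\rho(A,B)$: since $(\lambda B-A)^{-2^n}$ is operator-analytic there (Lemma~\ref{ana}), the scalar maps $\lambda\mapsto|\langle(\lambda B-A)^{-2^n}x,y\rangle|$ are subharmonic for unit vectors $x,y$, and $g$, their supremum, is subharmonic because it is already continuous by part~(i). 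I would then consider $G=\{\lambda\in\Omega:g(\lambda)>c\}$, show it is open (a small disk about any point of $G$ stays in the strict sublevel region, and by connectedness of the component $\Omega$ it stays in $\Omega$), and note by continuity that $g=c$ on $\partial G$. The maximum principle applied to the subharmonic $g$ on the compact set $\overline{G}\subseteq\Omega\subseteq\rho(A,B)$ then contradicts $g>c$ inside $G$.

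The main obstacle is to make the maximum-principle step airtight, in particular to cover the degenerate possibility that $G$ is empty, i.e.\ that $g\equiv c$ on $\Omega$, where the step above is vacuous. The cleanest remedy I would use is to separate the component $\Omega$ from the remainder of $\Lambda_{n,\e}(A,B)$: since $\Omega$ is a component of the closed set $\Lambda_{n,\e}(A,B)$, I can enclose it in a bounded open set $V$ with $\overline{V}\subseteq\rho(A,B)$ and $g<c$ on $\partial V$, using the coincidence of components and quasicomponents for the compact set obtained by intersecting $\Lambda_{n,\e}(A,B)$ with a large closed disk. The maximum principle on $V$ then gives $\max_{\overline{V}}g=\max_{\partial V}g<c$, which contradicts $g\ge c$ on $\Omega\subseteq V$ outright, regardless of whether $g$ is constant on $\Omega$. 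Verifying the subharmonicity of $g$ from analyticity of the resolvent, and executing this topological separation, are the two steps that demand the most care.
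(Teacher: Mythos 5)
Your proposal is correct, and it follows the same two-pronged skeleton as the paper: continuity of the resolvent-norm function for part (i), subharmonicity plus the maximum principle for part (ii). However, your execution of part (ii) is genuinely different and in fact repairs a gap in the paper's own argument. The paper works inside the component: it sets $G= \Omega \cap \left\{\lambda \in \rho(A,B): \left\|(\lambda B-A)^{-2^n}\right\| >\e^{-2^n}\right\}$, shows $G$ is open, observes that $\left\|(\lambda B-A)^{-2^n}\right\|= \e^{-2^n}$ on the boundary of $G$ while the strict inequality holds inside, and invokes the maximum principle. That argument is vacuous precisely when $G=\emptyset$, i.e.\ when $\left\|(\lambda B-A)^{-2^n}\right\|$ is identically equal to $\e^{-2^n}$ on $\Omega$ --- the degenerate case you single out --- and the paper says nothing about it; one cannot dismiss this case by a local-maximum argument either, since other components of $\Lambda_{n,\e}(A,B)$ may accumulate on $\Omega$. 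Your remedy --- using the coincidence of components and quasicomponents of the compact set $\Lambda_{n,\e}(A,B)\cap D(0,R)$ to produce a bounded open set $V\supseteq \Omega$ with $\overline{V}\subseteq \rho(A,B)$ and $\left\|(\lambda B-A)^{-2^n}\right\|<\e^{-2^n}$ on $\partial V$, then applying the maximum principle (componentwise, since $V$ need not be connected) --- is sound and makes the set $G$ unnecessary altogether; just take care, via the finite-intersection-property step, to choose the relatively clopen set disjoint from both $\sigma(A,B)$ and the outer circle, so that $\overline{V}$ genuinely avoids the spectrum and the rest of $\Lambda_{n,\e}(A,B)$. Your part (i) is the paper's proof with one more small but real correction: the preimage $f^{-1}\left(\left[\e^{-1},\infty\right)\right)$ is only relatively closed in the open set $\rho(A,B)$, and closedness of $\Lambda_{n,\e}(A,B)$ follows, as you say, from taking the union with the closed set $\sigma(A,B)$ (equivalently, the complement $\rho(A,B)\cap f^{-1}\left(\left[0,\e^{-1}\right)\right)$ is open). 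In short: same strategy, but your version is complete where the paper's is not.
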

\begin{theorem}
Let $A, B \in \CL(\HI)$ such that $D(A), D(B), D(A+B), D(AB)$ are dense in $\HI$, $n\in \mathbb{Z}_{+} \cup \{0\}$, and $\epsilon> 0$.
\begin{enumerate}[(i)]
\item If $A$ or $ B$ in $\BL(\HI)$, then $\Lambda_{\e}(A^*,B^*)= \overline{\Lambda_{\e}(A^*,B^*)}$.
\item If $A, B \in \BL(\HI)$, then $\Lambda_{n,\e}(A^*,B^*)= \overline{\Lambda_{n,\e}(A^*,B^*)}$.
\end{enumerate}
\end{theorem}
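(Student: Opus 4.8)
The plan is to derive both assertions by applying the closedness criterion already proved for $\Lambda_{n,\e}(A,B)$ (the preceding theorem) to the adjoint pencil $(A^*,B^*)$. Since $D(A)$ and $D(B)$ are dense, the adjoints $A^*,B^*$ exist and are closed, so $(A^*,B^*)$ is again a pencil of closed operators to which that criterion can in principle be applied; the density of $D(A+B)$ and $D(AB)$ ensures in addition that the usual adjoint identifications hold, so that $(A^*,B^*)$ is a genuine pencil and $\rho(A^*,B^*)$ is the natural object. The entire task then reduces to checking the single hypothesis of that theorem for the adjoint pencil, namely that $B^*(\lambda B^*-A^*)^{-1}$ is bounded for every $\lambda\in\rho(A^*,B^*)$; once this is in place, the conclusion that $\Lambda_{n,\e}(A^*,B^*)$ equals its own closure follows verbatim.

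To verify the hypothesis, I would fix $\lambda\in\rho(A^*,B^*)$ and set $R=(\lambda B^*-A^*)^{-1}\in\BL(\HI)$, whose range is exactly $D(B^*)\cap D(A^*)\subseteq D(B^*)$. If $B\in\BL(\HI)$, then $B^*\in\BL(\HI)$ and $B^*R$ is a product of bounded operators, hence bounded. If instead $A\in\BL(\HI)$, then $A^*\in\BL(\HI)$, and applying $\lambda B^*-A^*$ to $R$ gives the everywhere-defined identity $\lambda B^*R-A^*R=I$; for $\lambda\neq 0$ this yields
\[
B^*R=\frac{1}{\lambda}\left(I+A^*R\right),
\]
which is bounded since $A^*$ and $R$ are. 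The remaining degenerate case $\lambda=0\in\rho(A^*,B^*)$, where this identity carries no information about $B^*R$, I would settle by the closed graph theorem: $B^*$ is closed and $R$ is bounded with range inside $D(B^*)$, so $B^*R$ is an everywhere-defined closed operator on $\HI$ and is therefore bounded. This covers every $\lambda\in\rho(A^*,B^*)$ in both parts.

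With the hypothesis verified, the preceding closedness theorem applied to $(A^*,B^*)$ gives that $\Lambda_{n,\e}(A^*,B^*)$ is closed; together with the boundedness of $B^*(\lambda B^*-A^*)^{-1}$ at every $\lambda$, Lemma \ref{ana} shows the resolvent of $(A^*,B^*)$ is analytic on $\rho(A^*,B^*)$, so $\lambda\mapsto\|(\lambda B^*-A^*)^{-2^n}\|^{1/2^n}$ is continuous for each $n$ and $\sigma(A^*,B^*)$ is closed by Theorem \ref{rethm}. In part (ii), where both $A$ and $B$ are bounded, this delivers $\Lambda_{n,\e}(A^*,B^*)=\overline{\Lambda_{n,\e}(A^*,B^*)}$ for all $n$; in part (i), where only one of $A,B$ is bounded, it yields at least the stated case $n=0$, i.e.\ $\Lambda_{\e}(A^*,B^*)=\overline{\Lambda_{\e}(A^*,B^*)}$. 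I expect the delicate point to be precisely the sub-case of part (i) in which $A$ is bounded but $B$ is genuinely unbounded: there the boundedness of $B^*(\lambda B^*-A^*)^{-1}$ must be argued uniformly across $\rho(A^*,B^*)$, including the degenerate value $\lambda=0$, and it is the need for this control that separates the full-strength conclusion for all $n$ (both operators bounded) from the weaker one recorded in part (i).
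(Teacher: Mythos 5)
There is a genuine gap, and it sits at the very first step: you have misread what the theorem asserts. In this paper the overline is \emph{not} topological closure; it is defined at the start of Section 2 by $\overline{\Omega}:=\{\overline{\lambda}:\lambda\in \Omega\}$, i.e.\ complex conjugation of the set. The theorem (whose printed left-hand sides contain an evident typo --- the paper's own proof makes clear they should read $\Lambda_{\e}(A,B)$ and $\Lambda_{n,\e}(A,B)$) is an adjoint-symmetry statement: the pseudospectrum of the pencil $(A,B)$ coincides with the complex conjugate of the pseudospectrum of the adjoint pencil $(A^*,B^*)$. The paper proves it by adjoint algebra alone: if one of $A,B$ is bounded, then $(\lambda B-A)^*=\overline{\lambda}\,B^*-A^*$ (via $(A+B)^*=A^*+B^*$), and since $\left\|T^{-1}\right\|=\left\|\left(T^*\right)^{-1}\right\|$ for a boundedly invertible operator, one gets $\left\|(\lambda B-A)^{-1}\right\|=\left\|\left(\overline{\lambda}B^*-A^*\right)^{-1}\right\|$, together with $\lambda\in\sigma(A,B)\Rightarrow\overline{\lambda}\in\sigma(A^*,B^*)$; this gives (i), with the reverse inclusion from density of $D(A)$, $D(B)$. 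For (ii) one additionally needs $\left(\left(\lambda B-A\right)^{2^n}\right)^*=\left(\overline{\lambda}B^*-A^*\right)^{2^n}$, i.e.\ the identity $(AB)^*=B^*A^*$ for the relevant products, and this is precisely why \emph{both} operators must be bounded there. It is also why the hypotheses demand density of $D(A+B)$ and $D(AB)$: these guarantee the adjoints in question exist. In your proposal those two hypotheses do no work whatsoever, which should have been a warning sign that the statement had been misconstrued.

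What you prove instead --- that $\Lambda_{n,\e}(A^*,B^*)$ is a closed subset of $\C$ --- is the content of the \emph{preceding} theorem in the paper (under the hypothesis that $B(\lambda B-A)^{-1}$ is bounded on the generalized resolvent set), and your verification of that hypothesis for the adjoint pencil is mostly sound on its own terms: the case $B\in\BL(\HI)$ is immediate, the identity $B^*R=\frac{1}{\lambda}\left(I+A^*R\right)$ handles $\lambda\neq 0$ when $A$ is bounded, and the closed graph argument at $\lambda=0$ works granting the pencil convention that the range of $(\lambda B^*-A^*)^{-1}$ lies in $D(A^*)\cap D(B^*)$. But this establishes the wrong property, and your own reading is internally inconsistent with the statement: if your argument were the intended one, it would yield closedness of $\Lambda_{n,\e}(A^*,B^*)$ for \emph{every} $n$ already when only $A$ is bounded, so the split between parts (i) and (ii) would be inexplicable. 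The true source of that split is the adjoint-of-a-product identity needed for $n\geq 1$, not any uniform control of $B^*(\lambda B^*-A^*)^{-1}$ near $\lambda=0$. To repair the proof, discard the closedness route entirely and argue the conjugate-symmetry directly as above.
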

\begin{proof}
\begin{enumerate}[(i)]
\item Let $A$ or $B$ in $\BL(\HI)$, then $A^*+B^*= \left(A+B\right)^*$; see \cite{sun}. If $\lambda \in \sigma(A,B)$ then $\overline{\lambda}\in \sigma(A^*,B^*)$. Further if $\lambda \in \Lambda_{\e}(A,B)\setminus \sigma(A,B)$, then 
\[
\left\|(\lambda B-A)^{-1} \right\|= \left\|\left( \ \overline{\lambda} B^*-A^*\right)^{-1} \right\| \geq \e^{-1}.
\]
The latest step follows as $(\lambda B-A)^{-1}\in \BL(\HI)$; see \cite{tre1}. Hence $\Lambda_{\e}(A,B) \subseteq \overline{\Lambda_{\e}(A^*,B^*)}$. Since $D(A), D(B)$ are dense in $\HI$, the other inclusion follows.
\item Let $A\in \BL(\HI)$, then $(A+B)^*= A^*+B^*$ and $B^*A^*= \left(AB\right)^*$; see \cite{sun}. If $\lambda\in \sigma(A,B)$ then $\overline{\lambda}\in \sigma(A^*, B^*)$. Further if $\lambda\in \Lambda_{n,\e}(A,B)\setminus \sigma(A,B)$ and $B\in \BL(\HI)$,
\[
\left(\left(\lambda B-A\right)^{-2^n}\right)^*= \left( \left(\left(\lambda B-A\right)^{2^n}\right)^{-1}\right)^*= \left( \left(\left(\lambda B-A\right)^{2^n}\right)^{*}\right)^{-1}= \left(\overline{\lambda}B^*-A^*\right)^{-2^n}.
\]
Thus
\[
 \left\|(\lambda B-A)^{-2^n} \right\|^{\frac{1}{2^n}}= \left\|\left(\left(\lambda B-A\right)^{-2^n}\right)^*\right\|^{\frac{1}{2^n}}= \left\|\left( \ \overline{\lambda} B^*-A^*\right)^{-2^n} \right\|^{\frac{1}{2^n}} \geq \e^{-1}.
\]
Hence $\Lambda_{n,\epsilon}(A,B) \subseteq \overline{\Lambda_{n,\e}(A^*,B^*)}$. Since $D(A), D(B)$ are dense in $\HI$, the other inclusion follows.
\end{enumerate}
\end{proof}
The following theorem characterizes the $(n,\e)$-pseudospectrum of a self-adjoint operator pencil.
\begin{theorem}
Let $A, B\in \CL(\HI)$ be self-adjoint operators with $B$ invertible and $AB=BA$. For $n\in \mathbb{Z}_{+}\cup \{0\}$ and $\epsilon>0$,
\[
\Lambda_{n,\e}(A,B)\subseteq  \sigma(A,B)+ \Delta_{\e \left\|B^{-2^n} \right\|^{\frac{1}{2^n}}} \subseteq \mathbb{R}+\Delta_{\e \left\|B^{-2^n} \right\|^{\frac{1}{2^n}}}.
\]
\end{theorem}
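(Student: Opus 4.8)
The plan is to collapse the pencil to a single self-adjoint operator using the invertibility of $B$ and the commutativity hypothesis. Since $B\in\CL(\HI)$ is self-adjoint and invertible, $B^{-1}\in\BL(\HI)$ is self-adjoint, and $AB=BA$ forces $AB^{-1}=B^{-1}A$. I would set $T=B^{-1}A=AB^{-1}$ and first argue that $T$ is self-adjoint: formally $T^{*}=(B^{-1}A)^{*}=A^{*}(B^{-1})^{*}=AB^{-1}=T$, so $\sigma(T)\subseteq\mathbb{R}$. Factoring $\lambda B-A=B(\lambda I-T)$ and using that $B$ is invertible gives $\sigma(A,B)=\sigma(T)\subseteq\mathbb{R}$, which immediately yields the second inclusion $\sigma(A,B)+\Delta_{r}\subseteq\mathbb{R}+\Delta_{r}$, where $r=\e\,\|B^{-2^{n}}\|^{1/2^{n}}$.

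For the first inclusion, fix $\lambda\in\Lambda_{n,\e}(A,B)\setminus\sigma(A,B)$ (points of $\sigma(A,B)$ lie trivially in $\sigma(A,B)+\Delta_{r}$) and estimate $\|(\lambda B-A)^{-2^{n}}\|$. From $\lambda B-A=B(\lambda I-T)$ I obtain $(\lambda B-A)^{-1}=(\lambda I-T)^{-1}B^{-1}$, and since $B^{-1}$ commutes with $T$ it commutes with the resolvent $(\lambda I-T)^{-1}$; the two factors therefore commute and
\[
(\lambda B-A)^{-2^{n}}=(\lambda I-T)^{-2^{n}}\,B^{-2^{n}}.
\]
Submultiplicativity gives $\|(\lambda B-A)^{-2^{n}}\|\leq\|(\lambda I-T)^{-2^{n}}\|\,\|B^{-2^{n}}\|$, and this is precisely the step that produces the clean factor $\|B^{-2^{n}}\|$ appearing in the statement.

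The next step exploits the self-adjointness of $T$: the resolvent $(\lambda I-T)^{-1}$ is bounded and normal, so $\|(\lambda I-T)^{-2^{n}}\|=\|(\lambda I-T)^{-1}\|^{2^{n}}$ and $\|(\lambda I-T)^{-1}\|=\mathrm{dist}(\lambda,\sigma(T))^{-1}$. Feeding these into the pseudospectral hypothesis $\e^{-1}\leq\|(\lambda B-A)^{-2^{n}}\|^{1/2^{n}}$ and taking $2^{n}$-th roots gives
\[
\e^{-1}\leq\frac{\|B^{-2^{n}}\|^{1/2^{n}}}{\mathrm{dist}(\lambda,\sigma(T))},
\]
which rearranges to $\mathrm{dist}(\lambda,\sigma(A,B))\leq\e\,\|B^{-2^{n}}\|^{1/2^{n}}=r$; that is, $\lambda\in\sigma(A,B)+\Delta_{r}$.

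The part I expect to be delicate is the unbounded-operator bookkeeping rather than the estimates: one must verify that $T=B^{-1}A$ is genuinely self-adjoint on its natural domain (not merely symmetric), that $B^{-1}$ and $(\lambda I-T)^{-1}$ commute as operators with matching domains, and that the factorization of $(\lambda B-A)^{-2^{n}}$ is valid beyond the purely formal level. The cleanest way to dispatch all of these at once is to invoke the spectral theorem for the commuting self-adjoint pair $(A,B)$: a joint resolution of the identity simultaneously reduces $A$, $B$, $B^{-1}$, and $T$, making the commutation relations, the power-splitting, and the normal-operator norm identities automatic.
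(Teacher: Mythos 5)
Your proposal is correct and follows essentially the same route as the paper: the paper also collapses the pencil to $B^{-1}A$, notes it is self-adjoint with $\sigma(A,B)=\sigma(B^{-1}A)\subseteq\mathbb{R}$, and uses commutativity to obtain exactly your inequality $\left\|(\lambda B-A)^{-2^n}\right\|^{1/2^n}\leq \left\|B^{-2^n}\right\|^{1/2^n}\left\|(\lambda I-B^{-1}A)^{-1}\right\|=\left\|B^{-2^n}\right\|^{1/2^n}/\mathrm{dist}\bigl(\lambda,\sigma(B^{-1}A)\bigr)$. The only difference is presentational: the paper compresses the power-splitting, normality, and distance-formula steps into that single displayed inequality (citing Kato and Kreyszig), whereas you spell them out and flag the domain subtleties explicitly.
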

\begin{proof}
Suppose $B$ is invertible and self-adjoint. Then $B^{-1}A$ is self-adjoint and $\sigma(A,B)= \sigma(B^{-1}A)\subseteq \mathbb{R}$; see \cite{kre}. Since $AB= BA$, for $\lambda \notin \sigma(A,B)$, (refer \cite{kato})
\[ 
\left\|(\lambda B-A)^{-2^n} \right\|^{\frac{1}{2^n}} \leq \left\|B^{-2^n} \right\|^{\frac{1}{2^n}} \left\|(\lambda I-B^{-1}A)^{-1} \right\|= \frac{\left\|B^{-2^n} \right\|^{\frac{1}{2^n}}}{\text{dist}\left(\lambda, \sigma(B^{-1}A)\right)}.
\]
Thus
\[
\Lambda_{n,\e}(A,B)\subseteq  \sigma(A,B)+ \Delta_{\e \left\|B^{-2^n} \right\|^{\frac{1}{2^n}}} \subseteq \mathbb{R}+\Delta_{\e \left\|B^{-2^n} \right\|^{\frac{1}{2^n}}}.
\]
\end{proof}
\begin{corollary}\label{coro1}
If $A\in \CL(\HI)$ is self-adjoint, $n\in \mathbb{Z}_{+}\cup \{0\}$, and $\epsilon> 0$. Then
\[
\Lambda_{n,\e}(A)\subseteq \sigma(A)+ \Delta_{\e}\subseteq \mathbb{R}+\Delta_{\e}.
\]
\end{corollary}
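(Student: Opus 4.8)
The plan is to obtain this statement as the special case $B=I$ of the immediately preceding theorem. First I would verify that $B=I$ meets all the hypotheses required there: the identity operator is self-adjoint, invertible, and commutes with every $A\in\CL(\HI)$, so the relation $AI=IA$ holds trivially and the theorem is applicable with this choice of $B$.

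Next I would reduce the radius of the enlarging disc. Since $B=I$ gives $B^{-2^n}=I$, we have $\|B^{-2^n}\|^{1/2^n}=1$, so the disc $\Delta_{\e\|B^{-2^n}\|^{1/2^n}}$ simplifies to $\Delta_{\e}$. By part (2) of the Remark we identify $\Lambda_{n,\e}(A,I)=\Lambda_{n,\e}(A)$, and from the definition of the generalized resolvent set with $B=I$ we have $\rho(A,I)=\rho(A)$ and hence $\sigma(A,I)=\sigma(A)$, the usual spectrum. Substituting these identifications into the conclusion of the theorem yields $\Lambda_{n,\e}(A)\subseteq\sigma(A)+\Delta_{\e}\subseteq\mathbb{R}+\Delta_{\e}$, where the final inclusion uses that a self-adjoint operator has real spectrum.

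I do not anticipate any genuine obstacle here, since all the analytic content is already carried by the preceding theorem. The only items to confirm are the elementary bookkeeping facts $B^{-2^n}=I$ and the identification $\Lambda_{n,\e}(A,I)=\Lambda_{n,\e}(A)$, both of which are immediate.
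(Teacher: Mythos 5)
Your proposal is correct and matches the paper's intended derivation exactly: the corollary is stated without a separate proof precisely because it is the specialization $B=I$ of the preceding theorem, with $\sigma(A,I)=\sigma(A)$, $\Lambda_{n,\e}(A,I)=\Lambda_{n,\e}(A)$, and $\left\|I^{-2^n}\right\|^{1/2^n}=1$ collapsing the disc radius to $\e$. Your verification of the hypotheses (self-adjointness, invertibility, and commutativity of $I$) and the bookkeeping steps are all that is needed.
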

\section{Pseudospectra of the $2\times 2$ unbounded block operator matrix pencil}
The transformation of the one-dimensional heat equation into  $2\times 2$ unbounded block operator matrix pencil stimulates the development of this section. The generalized Frobenius-Schur factorization examines the spectral, pseudospectral, and $(n,\e)$-pseudospectral enclosures of the $2\times 2$ unbounded block operator pencil. The following provides the generalized Frobenius-Schur factorization for $2\times 2$ unbounded block operator matrix pencil. For more on Frobenius-Schur factorization of block operator matrices, see \cite{jerbi, tret}. Consider the $2 \times 2$ block operator matrices with the entries as closed operators (unbounded operators) on $\HI$. Let $A_i, B_i\in \CL(\HI)$ ($i= 1,2,3,4$), 
\[
\mathcal{A}= \begin{pmatrix}A_1& A_2\\ A_3&A_4\end{pmatrix}\quad \textnormal{and} \quad \mathcal{B}= \begin{pmatrix}B_1& B_2\\ B_3&B_4\end{pmatrix}.
\]
Then
\begin{align*}
D(\mathcal{A})&= \left(D(A_1) \cap D(A_3) \right) \times \left(D(A_2) \cap D(A_4) \right),\\
D(\mathcal{B})&= \left(D(B_1) \cap D(B_3) \right) \times \left(D(B_2) \cap D(B_4) \right).
\end{align*}
Let $\lambda\in \C$, then 
\[
(\mathcal{A},\mathcal{B})(\lambda)= \lambda\mathcal{B}-\mathcal{A}= \begin{pmatrix}\lambda B_1-A_1& \lambda B_2-A_2\\ \lambda B_3-A_3&\lambda B_4-A_4\end{pmatrix}
\]
is defined on $D(\mathcal{A})\cap D(\mathcal{B})$. If $\lambda\in \rho (A_4,B_4)$, then
\begin{equation}\label{fbs2}
\lambda\mathcal{B}-\mathcal{A}=
\begin{pmatrix}I& F_1(\lambda)\\ &I\end{pmatrix}\begin{pmatrix}\lambda B_1-S_1(\lambda)&\\&\lambda B_4-A_4 \end{pmatrix}\begin{pmatrix}I&\\G_1(\lambda)&I\end{pmatrix},
\end{equation}
where $S_1(\lambda)= \left(\lambda B_2-A_2\right)\left(\lambda B_4-A_4\right)^{-1}\left(\lambda B_3-A_3\right)+A_1$, $F_1(\lambda)= \left(\lambda B_2-A_2\right)\left(\lambda B_4-A_4\right)^{-1}$, and $G_1(\lambda)= \left(\lambda B_4-A_4\right)^{-1}\left(\lambda B_3-A_3\right)$. This factorization is called the generalized Frobenius-Schur factorization along the first complement. If $\lambda \in \rho (A_1,B_1)$, then
\begin{align}\label{fbs1}
\lambda\mathcal{B}-\mathcal{A}= \begin{pmatrix}I& \\ F_2(\lambda) &I\end{pmatrix}\begin{pmatrix}\lambda B_1-A_1& \\ & \lambda B_4-S_2(\lambda)\end{pmatrix}\begin{pmatrix}I& G_2(\lambda)\\ &I\end{pmatrix}, 
\end{align}
where $S_2(\lambda)= \left(\lambda B_3-A_3\right)\left(\lambda B_1-A_1\right)^{-1}\left(\lambda B_2-A_2\right)+A_4$, $F_2(\lambda)= \left(\lambda B_3-A_3\right)\left(\lambda B_1-A_1\right)^{-1}$, and $G_2(\lambda)= \left(\lambda B_1-A_1\right)^{-1}\left(\lambda B_2-A_2\right)$. This factorization is called generalized Frobenius-Schur factorization along the second complement. The following hypotheses are considered for further development.
\begin{enumerate}[{\bf(L1)}]
\item $(A_4,B_4)$ is closed and densely defined, $\rho(A_4,B_4) \neq \emptyset$.
\item $D(A_4)\cap D(B_4)\subseteq D(A_2)\cap D(B_2)$ and $F_1(\lambda)$ is bounded for every $\lambda \in \rho(A_4,B_4)$.
\item $G_1(\lambda)$ is bounded on $D(A_3)\cap D(B_3)$ for every $\lambda \in \rho(A_4,B_4)$.
\item $D(A_1) \cap D(B_1)\cap D(A_3) \cap D(B_3)$ is dense and $\lambda B_1-S_1(\lambda)$ is closed for every $\lambda\in \rho(A_4,B_4)$.
\end{enumerate}
\begin{enumerate}[{\bf(H1)}]
\item $(A_1,B_1)$ is closed and densely defined, $\rho(A_1,B_1) \neq \emptyset$.
\item $D(A_1) \cap D(B_1)\subseteq D(A_3) \cap D(B_3)$ and $F_2(\lambda)$ is bounded for every $\lambda \in \rho(A_1,B_1)$.
\item $G_2(\lambda)$ is bounded on $D(A_2)\cap D(B_2)$ for every $\lambda \in \rho(A_1,B_1)$.
\item $D(A_2) \cap D(B_2) \cap D(A_4) \cap D(B_4)$ is dense and $\lambda B_4-S_2(\lambda)$ is closed for every $\lambda \in \rho(A_1,B_1)$.
\end{enumerate}
\begin{theorem}\label{fbsthm2}
Let the block operator matrix pencil $(\mathcal{A}, \mathcal{B})$ satisfies \textnormal{\textbf{(L1)}}-\textnormal{\textbf{(L4)}}. Then $\lambda \in \sigma(\mathcal{A},\mathcal{B})$ if and only if $\lambda\in \sigma(S_1(\lambda), B_1)$. Further for $\lambda \in \rho(\mathcal{A},\mathcal{B})$,
\[
\left(\lambda\mathcal{B}-\mathcal{A}\right)^{-1} 
= \begin{pmatrix}I&\\-G_1(\lambda)&I\end{pmatrix} \begin{pmatrix}\left(\lambda B_1-S_1(\lambda)\right)^{-1}\\ &\left(\lambda B_4-A_4\right)^{-1}\end{pmatrix} \begin{pmatrix}I&-F_1(\lambda)\\&I\end{pmatrix}.
\]
\end{theorem}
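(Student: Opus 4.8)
The plan is to read off both assertions directly from the generalized Frobenius--Schur factorization \eqref{fbs2}, which is valid for every $\lambda\in\rho(A_4,B_4)$; throughout I work with such $\lambda$, since $S_1(\lambda)$ is only defined there and this is the implicit range of the statement. The crucial observation is that the two outer triangular factors in \eqref{fbs2} are bounded operators on $\HI\times\HI$ with bounded inverses. Indeed, hypotheses \textbf{(L2)} and \textbf{(L3)} make $F_1(\lambda)$ and $G_1(\lambda)$ bounded, and a direct multiplication gives
\[
\begin{pmatrix}I&F_1(\lambda)\\&I\end{pmatrix}^{-1}=\begin{pmatrix}I&-F_1(\lambda)\\&I\end{pmatrix},\qquad \begin{pmatrix}I&\\G_1(\lambda)&I\end{pmatrix}^{-1}=\begin{pmatrix}I&\\-G_1(\lambda)&I\end{pmatrix},
\]
both of which are again bounded. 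Since pre- and post-composition with boundedly invertible operators preserves the property of possessing a bounded inverse, $\lambda\mathcal{B}-\mathcal{A}$ is boundedly invertible if and only if the middle block-diagonal factor is.

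Next I would analyse the diagonal factor $\mathrm{diag}\!\left(\lambda B_1-S_1(\lambda),\,\lambda B_4-A_4\right)$. A block-diagonal operator is boundedly invertible exactly when each diagonal entry is, and since $\lambda\in\rho(A_4,B_4)$ already makes $\lambda B_4-A_4$ boundedly invertible, the whole factor is boundedly invertible if and only if $\lambda B_1-S_1(\lambda)$ is. Hypothesis \textbf{(L4)} guarantees that $\lambda B_1-S_1(\lambda)$ is closed and densely defined, so its algebraic bijectivity forces the inverse to be bounded by the bounded inverse (open mapping) theorem for closed operators; this is precisely what permits phrasing the condition as $\lambda\in\rho(S_1(\lambda),B_1)$. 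Combining with the previous paragraph yields $\lambda\in\rho(\mathcal{A},\mathcal{B})$ if and only if $\lambda\in\rho(S_1(\lambda),B_1)$, and passing to complements gives the asserted equivalence $\lambda\in\sigma(\mathcal{A},\mathcal{B})\iff\lambda\in\sigma(S_1(\lambda),B_1)$.

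For the resolvent formula, fixing $\lambda\in\rho(\mathcal{A},\mathcal{B})$ I would invert \eqref{fbs2} factor by factor, reversing the order of the three factors and substituting the explicit inverses of the triangular factors recorded above together with the block-diagonal inverse $\mathrm{diag}\!\left((\lambda B_1-S_1(\lambda))^{-1},(\lambda B_4-A_4)^{-1}\right)$; this reproduces exactly the stated expression. The main obstacle is not the algebra, which is routine, but the domain bookkeeping: one must ensure that \eqref{fbs2} is a genuine operator equality on $D(\mathcal{A})\cap D(\mathcal{B})$ and that the composed inverse lands in the correct domain. Because the outer factors are bounded and bijective on all of $\HI\times\HI$, they transport domains cleanly, so the real content is concentrated in the unbounded middle block; it is exactly hypotheses \textbf{(L1)}--\textbf{(L4)} that are tailored to control its closedness and domain so that the factorization and its inversion are legitimate.
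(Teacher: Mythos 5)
Your proposal is correct and follows essentially the same route as the paper: both rest entirely on the generalized Frobenius--Schur factorization (\ref{fbs2}), reading off the equivalence $\lambda\in\rho(\mathcal{A},\mathcal{B})\iff\lambda\in\rho(S_1(\lambda),B_1)$ from the bounded invertibility of the outer triangular factors and obtaining the resolvent formula by inverting the three factors in reverse order. The paper's proof is a one-line appeal to the factorization, whereas you supply the supporting details (explicit inverses of the triangular factors, the role of \textbf{(L2)}--\textbf{(L4)}, and the closed-graph argument), which is exactly what the paper leaves implicit.
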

\begin{proof}
Suppose the block operator matrix pencil $(\mathcal{A}, \mathcal{B})$ satisfies \textnormal{\textbf{(L1)}}-\textnormal{\textbf{(L4)}}. Then the generalized Frobenius-Schur factoriztion along the first complement $(\ref{fbs2})$ for the block operator matrix pencil $\lambda\mathcal{B}-\mathcal{A}$ gives $\lambda \in \rho(\mathcal{A},\mathcal{B})$ if and only if $\lambda\in \rho(S_1(\lambda), B_1)$, and
\[
\left(\lambda\mathcal{B}-\mathcal{A}\right)^{-1} 
= \begin{pmatrix}I&\\-G_1(\lambda)&I\end{pmatrix} \begin{pmatrix}\left(\lambda B_1-S_1(\lambda)\right)^{-1}\\ &\left(\lambda B_4-A_4\right)^{-1}\end{pmatrix} \begin{pmatrix}I&-F_1(\lambda)\\&I\end{pmatrix}.
\]
\end{proof}
\begin{theorem}\label{fbsthm1}
Let the block operator matrix pencil $(\mathcal{A}, \mathcal{B})$ satisfies \textnormal{\textbf{(H1)}}-\textnormal{\textbf{(H4)}}. Then $\lambda \in \sigma(\mathcal{A},\mathcal{B})$ if and only if $\lambda\in \sigma(S_2(\lambda), B_4)$. Further for $\lambda \in \rho(\mathcal{A},\mathcal{B})$,
\[
\left(\lambda\mathcal{B}-\mathcal{A}\right)^{-1} 
= \begin{pmatrix}I&-G_2(\lambda)\\ &I\end{pmatrix} \begin{pmatrix} \left(\lambda B_1-A_1\right)^{-1} &\\ & \left(\lambda B_4-S_2(\lambda)\right)^{-1}\end{pmatrix} \begin{pmatrix}I&\\- F_2(\lambda)&I\end{pmatrix}.
\]
\end{theorem}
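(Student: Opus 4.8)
The plan is to mirror the proof of Theorem \ref{fbsthm2}, now factoring along the second complement via (\ref{fbs1}) under the hypotheses \textbf{(H1)}--\textbf{(H4)}. The starting point is the identity
\[
\lambda\mathcal{B}-\mathcal{A}= \begin{pmatrix}I& \\ F_2(\lambda) &I\end{pmatrix}\begin{pmatrix}\lambda B_1-A_1& \\ & \lambda B_4-S_2(\lambda)\end{pmatrix}\begin{pmatrix}I& G_2(\lambda)\\ &I\end{pmatrix},
\]
valid for $\lambda\in\rho(A_1,B_1)$. By \textbf{(H2)} and \textbf{(H3)} the off-diagonal entries $F_2(\lambda)$ and $G_2(\lambda)$ are bounded, so the two outer triangular factors lie in $\BL(\HI\times\HI)$ and are boundedly invertible, with explicit inverses
\[
\begin{pmatrix}I& \\ F_2(\lambda) &I\end{pmatrix}^{-1}= \begin{pmatrix}I& \\ -F_2(\lambda) &I\end{pmatrix}, \qquad \begin{pmatrix}I& G_2(\lambda)\\ &I\end{pmatrix}^{-1}= \begin{pmatrix}I& -G_2(\lambda)\\ &I\end{pmatrix}.
\]

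First I would use these invertible outer factors to reduce invertibility of $\lambda\mathcal{B}-\mathcal{A}$ to invertibility of the middle diagonal factor. Since $\lambda\in\rho(A_1,B_1)$ forces $(\lambda B_1-A_1)^{-1}\in\BL(\HI)$ by \textbf{(H1)}, the diagonal operator is invertible precisely when its lower block $\lambda B_4-S_2(\lambda)$ is, i.e. when $\lambda\in\rho(S_2(\lambda),B_4)$; equivalently $\lambda\in\sigma(\mathcal{A},\mathcal{B})$ if and only if $\lambda\in\sigma(S_2(\lambda),B_4)$. Here \textbf{(H4)} is what makes the statement meaningful: it guarantees $\lambda B_4-S_2(\lambda)$ is closed and densely defined, so that $\rho(S_2(\lambda),B_4)$ and $\sigma(S_2(\lambda),B_4)$ are the legitimate resolvent set and spectrum of a closed pencil. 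With the equivalence in hand, the resolvent formula follows by inverting the product in reverse order and substituting the inverses of the triangular factors together with the block-diagonal inverse $\mathrm{diag}\bigl((\lambda B_1-A_1)^{-1},(\lambda B_4-S_2(\lambda))^{-1}\bigr)$.

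I expect the main obstacle to be the domain bookkeeping rather than the algebra. One must check that the factorization (\ref{fbs1}) is an equality of operators on $D(\mathcal{A})\cap D(\mathcal{B})$ and not merely a formal identity: the right-hand side should map this domain into $\HI\times\HI$ with the unipotent factors respecting the domain inclusions supplied by \textbf{(H2)} and \textbf{(H3)}, namely $D(A_1)\cap D(B_1)\subseteq D(A_3)\cap D(B_3)$ together with boundedness of $G_2(\lambda)$ on $D(A_2)\cap D(B_2)$. Dually, for the resolvent identity I must verify that the candidate operator on the right maps all of $\HI\times\HI$ back into $D(\mathcal{A})\cap D(\mathcal{B})$ and is a genuine two-sided inverse; boundedness of $F_2(\lambda)$ and $G_2(\lambda)$ keeps every composite bounded, while closedness from \textbf{(H4)} ensures $(\lambda B_4-S_2(\lambda))^{-1}$ is bounded once $\lambda\in\rho(S_2(\lambda),B_4)$. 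Apart from these domain verifications, the argument is entirely parallel to Theorem \ref{fbsthm2}, with the roles of the first and fourth diagonal entries interchanged.
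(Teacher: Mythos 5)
Your proposal is correct and follows the same route as the paper: the paper's proof likewise invokes the generalized Frobenius--Schur factorization along the second complement (\ref{fbs1}) under \textbf{(H1)}--\textbf{(H4)}, reduces invertibility of $\lambda\mathcal{B}-\mathcal{A}$ to that of the diagonal middle factor, and inverts the product to obtain the resolvent formula. Your additional remarks on inverting the unipotent outer factors and on the domain bookkeeping only make explicit what the paper leaves tacit.
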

\begin{proof}
Suppose the block operator matrix pencil $(\mathcal{A}, \mathcal{B})$ satisfies \textnormal{\textbf{(H1)}}-\textnormal{\textbf{(H4)}}. Then the generalized Frobenius-Schur factoriztion along the second complement (\ref{fbs1}) for the block operator matrix pencil $\lambda\mathcal{B}-\mathcal{A}$ gives $\lambda\in \rho(\mathcal{A},\mathcal{B})$ if and only if $\lambda\in \rho(S_2(\lambda), B_4)$, and 
\[
\left(\lambda\mathcal{B}-\mathcal{A}\right)^{-1} 
= \begin{pmatrix}I&-G_2(\lambda)\\ &I\end{pmatrix} \begin{pmatrix} \left(\lambda B_1-A_1\right)^{-1} &\\ & \left(\lambda B_4-S_2(\lambda)\right)^{-1}\end{pmatrix} \begin{pmatrix}I&\\- F_2(\lambda)&I\end{pmatrix}.
\]
\end{proof}
The following theorem identifies the spectral inclusion of the $2\times 2$ unbounded block operator matrix pencil using generalized Frobenius-Schur factorizations.
\begin{theorem}
Let the block operator matrix pencil $(\mathcal{A}, \mathcal{B})$ satisfies \textnormal{\textbf{(L1)}}-\textnormal{\textbf{(L4)}} and \textnormal{\textbf{(H1)}}-\textnormal{\textbf{(H4)}}. Then 
\[
\sigma(\mathcal{A},\mathcal{B}) \subseteq \big\lbrace \lambda \in\C : \min \left\lbrace \left\|M_{S_1}(\lambda)\right\|, \left\|N_{S_1}(\lambda)\right\|,\left\|M_{S_2}(\lambda)\right\|, \left\|N_{S_2}(\lambda)\right\|\right\rbrace \geq 1\big\rbrace,
\]
where $M_{S_1}(\lambda), N_{S_1}(\lambda), M_{S_2}(\lambda), N_{S_2}(\lambda)$ are defined subsequently.
\end{theorem}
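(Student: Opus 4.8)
The plan is to prove the inclusion by contraposition: I will show that whenever the minimum of the four norms is strictly less than $1$, the point $\lambda$ lies in $\rho(\mathcal{A},\mathcal{B})$. The four operators are exactly the coupling terms produced by the two generalized Frobenius--Schur factorizations (\ref{fbs2}) and (\ref{fbs1}). Concretely, for $\lambda\in\rho(A_1,B_1)\cap\rho(A_4,B_4)$ I would set
\[
M_{S_1}(\lambda)= G_2(\lambda)G_1(\lambda),\quad N_{S_1}(\lambda)= F_1(\lambda)F_2(\lambda),\quad M_{S_2}(\lambda)= G_1(\lambda)G_2(\lambda),\quad N_{S_2}(\lambda)= F_2(\lambda)F_1(\lambda),
\]
each of which is a product of two of the maps $F_1,G_1,F_2,G_2$ and hence, by \textbf{(L2)}--\textbf{(L3)} and \textbf{(H2)}--\textbf{(H3)}, a bounded operator on $\HI$.

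The second step is to record the resulting factorizations of the Schur complement pencils. Expanding $S_1(\lambda)$ and pulling the resolvent $\lambda B_1-A_1$ out on the left and on the right gives
\[
\lambda B_1- S_1(\lambda)= (\lambda B_1-A_1)\bigl(I- M_{S_1}(\lambda)\bigr)= \bigl(I- N_{S_1}(\lambda)\bigr)(\lambda B_1-A_1),
\]
and, symmetrically, factoring $\lambda B_4-S_2(\lambda)$ through $\lambda B_4-A_4$ gives
\[
\lambda B_4- S_2(\lambda)= (\lambda B_4-A_4)\bigl(I- M_{S_2}(\lambda)\bigr)= \bigl(I- N_{S_2}(\lambda)\bigr)(\lambda B_4-A_4).
\]

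The heart of the argument is then a Neumann series. If $\left\|M_{S_1}(\lambda)\right\|<1$, then $I-M_{S_1}(\lambda)$ is boundedly invertible, so the first factorization shows $\lambda B_1-S_1(\lambda)$ is boundedly invertible, i.e. $\lambda\in\rho(S_1(\lambda),B_1)$, and Theorem \ref{fbsthm2} gives $\lambda\in\rho(\mathcal{A},\mathcal{B})$. The case $\left\|N_{S_1}(\lambda)\right\|<1$ is identical using the right-hand factorization, and the cases $\left\|M_{S_2}(\lambda)\right\|<1$ or $\left\|N_{S_2}(\lambda)\right\|<1$ give $\lambda\in\rho(S_2(\lambda),B_4)$ and invoke Theorem \ref{fbsthm1}. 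Contraposition then yields
\[
\sigma(\mathcal{A},\mathcal{B})\cap\rho(A_1,B_1)\cap\rho(A_4,B_4)\subseteq \big\lbrace\lambda: \min\left\lbrace\left\|M_{S_1}(\lambda)\right\|,\left\|N_{S_1}(\lambda)\right\|,\left\|M_{S_2}(\lambda)\right\|,\left\|N_{S_2}(\lambda)\right\|\right\rbrace\geq 1\big\rbrace.
\]

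I expect the main obstacle to be bookkeeping the set on which the four operators are defined. All of them presuppose $\lambda\in\rho(A_1,B_1)\cap\rho(A_4,B_4)$, so the genuine content is the inclusion on that set, and the points of $\sigma(A_1,B_1)\cup\sigma(A_4,B_4)$ have to be absorbed into the right-hand side under the convention that an undefined (equivalently, norm-blow-up) coupling term counts as being $\geq 1$. The two delicate verifications are: that the products $G_2G_1$, $F_1F_2$, $G_1G_2$, $F_2F_1$ are genuinely bounded on all of $\HI$, and that the two displayed factorizations hold as operator identities on $D(\mathcal{A})\cap D(\mathcal{B})$ (so that bounded invertibility of the scalar factor $I-M_{S_1}(\lambda)$ transfers to the unbounded pencil $\lambda B_1-S_1(\lambda)$ rather than to a mere restriction), both of which rest on the closedness and density clauses \textbf{(L4)} and \textbf{(H4)}.
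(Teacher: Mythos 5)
Your proposal is correct and takes essentially the same route as the paper: your four coupling operators coincide with the paper's $M_{S_1},N_{S_1},M_{S_2},N_{S_2}$ (with the $M$/$N$ labels swapped within each pair, which is immaterial since only the minimum of the four norms enters), your left/right factorizations of $\lambda B_1-S_1(\lambda)$ and $\lambda B_4-S_2(\lambda)$ through $\lambda B_1-A_1$ and $\lambda B_4-A_4$ are exactly the paper's displayed identities, and the Neumann-series argument combined with Theorems \ref{fbsthm2} and \ref{fbsthm1} followed by contraposition is precisely the paper's proof. Your closing remark about the bookkeeping for $\lambda\notin\rho(A_1,B_1)\cap\rho(A_4,B_4)$ flags a point the paper itself leaves implicit, so it is a refinement rather than a divergence.
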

\begin{proof}
Suppose $(\mathcal{A}, \mathcal{B})$ satisfies (\textbf{L1})-(\textbf{L4}) and (\textbf{H1})-(\textbf{H4}), then
\begin{align*}
\lambda B_1-S_1(\lambda)&= \left(\lambda B_1-A_1\right)-\left(\lambda B_2-A_2\right)\left(\lambda B_4-A_4\right)^{-1}\left(\lambda B_3-A_3\right),\\
\left(\lambda B_1-S_1(\lambda)\right)\left(\lambda B_1-A_1\right)^{-1} &= I-\left[\left(\lambda B_2-A_2\right)\left(\lambda B_4-A_4\right)^{-1}\left(\lambda B_3-A_3\right)\left(\lambda B_1-A_1\right)^{-1}\right],\\
\left(\lambda B_1-A_1\right)^{-1}\left(\lambda B_1-S_1(\lambda)\right) &= I-\left[\left(\lambda B_1-A_1\right)^{-1}\left(\lambda B_2-A_2\right)\left(\lambda B_4-A_4\right)^{-1}\left(\lambda B_3-A_3\right)\right].
\end{align*}
For $\lambda\in \rho(A_1,B_1) \cap \rho(A_4,B_4)$, define
\[
M_{S_1}(\lambda)= \left(\lambda B_2-A_2\right)\left(\lambda B_4-A_4\right)^{-1}\left(\lambda B_3-A_3\right)\left(\lambda B_1-A_1\right)^{-1}.
\]
If $\left\|M_{S_1}(\lambda)\right\|<1$, then $\lambda \in \rho(S_1(\lambda),B_1)$. For $\lambda\in \rho(A_1,B_1) \cap \rho(A_4,B_4)$, define
\[
N_{S_1}(\lambda)= \left(\lambda B_1-A_1\right)^{-1}\left(\lambda B_2-A_2\right)\left(\lambda B_4-A_4\right)^{-1}\left(\lambda B_3-A_3\right).
\]
If $\left\|N_{S_1}(\lambda)\right\|<1$, then $\lambda \in \rho(S_1(\lambda),B_1)$. From Theorem \ref{fbsthm2}, we obtain the spectral inclusion
\begin{equation}\label{sp1}
\sigma(\mathcal{A},\mathcal{B})\subseteq \left\lbrace \lambda\in \C: \left\|M_{S_1}(\lambda)\right\|\geq 1 \right\rbrace \cap \left\lbrace \lambda\in \C: \left\|N_{S_1}(\lambda)\right\|\geq 1 \right\rbrace.
\end{equation}
Similarly,
\begin{align*}
\lambda B_4-S_2(\lambda)&= \lambda B_4- A_4- \left(\lambda B_3-A_3\right)\left(\lambda B_1-A_1\right)^{-1}\left(\lambda B_2-A_2\right)\\
\left(\lambda B_4-S_2(\lambda)\right) \left(\lambda B_4- A_4\right)^{-1}&= I -\left[\left(\lambda B_3-A_3\right)\left(\lambda B_1-A_1\right)^{-1}\left(\lambda B_2-A_2\right)\left(\lambda B_4-A_4\right)^{-1}\right]\\
\left(\lambda B_4-A_4\right)^{-1} \left(\lambda B_4-S_2(\lambda)\right)&= I-\left[\left(\lambda B_4-A_4\right)^{-1}\left(\lambda B_3-A_3\right)\left(\lambda B_1-A_1\right)^{-1}\left(\lambda B_2-A_2\right)\right]. 
\end{align*}
For $\lambda\in \rho(A_1,B_1) \cap \rho(A_4,B_4)$, define
\[
M_{S_2}(\lambda)= \left(\lambda B_3-A_3\right)\left(\lambda B_1-A_1\right)^{-1}\left(\lambda B_2-A_2\right)\left(\lambda B_4-A_4\right)^{-1}.
\]
If $\left\|M_{S_2}(\lambda)\right\|<1$, then $\lambda \in \rho(S_2(\lambda),B_4)$. For $\lambda\in \rho(A_1,B_1) \cap \rho(A_4,B_4)$, define
\[
N_{S_2}(\lambda)= \left(\lambda B_4-A_4\right)^{-1}\left(\lambda B_3-A_3\right)\left(\lambda B_1-A_1\right)^{-1}\left(\lambda B_2-A_2\right).
\]
If $\left\|N_{S_2}(\lambda)\right\|< 1$, then $\lambda \in \rho(S_2(\lambda),B_4)$. From Theorem \ref{fbsthm1},
\begin{equation}\label{sp2}
\sigma(\mathcal{A},\mathcal{B})\subseteq \left\lbrace \lambda\in \C: \left\|M_{S_2}(\lambda)\right\|\geq 1 \right\rbrace \cap \left\lbrace \lambda\in \C: \left\|N_{S_2}(\lambda)\right\|\geq 1 \right\rbrace.
\end{equation}
Combining the inclusions $(\ref{sp1})$ and $(\ref{sp2})$, we obtain
\[
\sigma(\mathcal{A},\mathcal{B}) \subseteq \big\lbrace \lambda \in\C : \min \left\lbrace \left\|M_{S_1}(\lambda)\right\|, \left\|N_{S_1}(\lambda)\right\|,\left\|M_{S_2}(\lambda)\right\|, \left\|N_{S_2}(\lambda)\right\|\right\rbrace \geq 1\big\rbrace.
\]
\end{proof}
Using the generalized Frobenius-Schur factorizations, the following theorems identify pseudospectral inclusions for the $2\times 2$ unbounded block operator matrix pencil.
\begin{theorem}\label{pseu1}
Let the block operator matrix pencil $(\mathcal{A}, \mathcal{B})$ satisfies \textnormal{\textbf{(H1)}}-\textnormal{\textbf{(H4)}}. Then for $\e>0$,
\[
\Lambda_{\e}(\mathcal{A},\mathcal{B}) \subseteq \left(\Lambda_{\e (1+\delta_1)(1+\delta_2)}\left(A_1, B_1\right)\right) \cup \left(\Lambda_{\e (1+\delta_1)(1+\delta_2)}\left(S_2(\lambda),B_4\right)\right),
\]
where $\delta_1= \left\|\left(\lambda B_1-A_1\right)^{-1}\left(\lambda B_2-A_2\right)\right\|$ and $\delta_2= \left\|\left(\lambda B_3-A_3\right)\left(\lambda B_1-A_1\right)^{-1}\right\|$.
\end{theorem}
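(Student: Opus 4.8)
The plan is to argue by contraposition, driving the resolvent factorization along the second complement (Theorem~\ref{fbsthm1}) through an elementary block-norm estimate. Write $\e' = \e(1+\delta_1)(1+\delta_2)$. Since $\delta_1$, $\delta_2$, and $S_2(\lambda)$ all depend on $\lambda$, the asserted inclusion must be read pointwise: for each fixed $\lambda$ in the left-hand set, $\e'$, $\delta_1$, $\delta_2$, and $S_2(\lambda)$ are evaluated at that same $\lambda$, and $\lambda$ lands in (at least) one of the two right-hand sets.

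First I would assume $\lambda$ lies outside both right-hand sets, i.e.\ $\lambda\notin\Lambda_{\e'}(A_1,B_1)$ and $\lambda\notin\Lambda_{\e'}(S_2(\lambda),B_4)$, and aim to deduce $\lambda\notin\Lambda_\e(\mathcal{A},\mathcal{B})$. Using the definition of the $\e$-pseudospectrum of a pencil (the $n=0$ instance), these two assumptions unpack to $\lambda\in\rho(A_1,B_1)$ with $\|(\lambda B_1-A_1)^{-1}\|<(\e')^{-1}$, and $\lambda\in\rho(S_2(\lambda),B_4)$ with $\|(\lambda B_4-S_2(\lambda))^{-1}\|<(\e')^{-1}$. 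Because $\lambda\in\rho(A_1,B_1)$, hypotheses \textbf{(H1)}--\textbf{(H4)} legitimize the factorization~(\ref{fbs1}); since in addition $\lambda\in\rho(S_2(\lambda),B_4)$, Theorem~\ref{fbsthm1} returns $\lambda\in\rho(\mathcal{A},\mathcal{B})$ together with the explicit resolvent
\[
(\lambda\mathcal{B}-\mathcal{A})^{-1}=\begin{pmatrix}I&-G_2(\lambda)\\ &I\end{pmatrix}\begin{pmatrix}(\lambda B_1-A_1)^{-1}&\\ &(\lambda B_4-S_2(\lambda))^{-1}\end{pmatrix}\begin{pmatrix}I&\\ -F_2(\lambda)&I\end{pmatrix}.
\]

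Next I would bound the norm of this product by submultiplicativity. The block-diagonal middle factor has norm $\max\{\|(\lambda B_1-A_1)^{-1}\|,\|(\lambda B_4-S_2(\lambda))^{-1}\|\}$. For each outer unit-triangular factor I would peel off the identity and estimate the remaining strictly triangular block; since $G_2(\lambda)=(\lambda B_1-A_1)^{-1}(\lambda B_2-A_2)$ and $F_2(\lambda)=(\lambda B_3-A_3)(\lambda B_1-A_1)^{-1}$ have norms $\delta_1$ and $\delta_2$, this yields the factor bounds $1+\delta_1$ and $1+\delta_2$. Multiplying the three estimates and inserting the two strict resolvent bounds gives
\[
\left\|(\lambda\mathcal{B}-\mathcal{A})^{-1}\right\|\le(1+\delta_1)(1+\delta_2)\max\left\{\left\|(\lambda B_1-A_1)^{-1}\right\|,\left\|(\lambda B_4-S_2(\lambda))^{-1}\right\|\right\}<(1+\delta_1)(1+\delta_2)(\e')^{-1}=\e^{-1},
\]
so $\lambda\notin\Lambda_\e(\mathcal{A},\mathcal{B})$, which is exactly the required contrapositive.

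I expect the only genuine friction to be bookkeeping the $\lambda$-dependence so that $\e'$, $\delta_1$, $\delta_2$, and $S_2(\lambda)$ stay tied to a single $\lambda$, and verifying the elementary bound that a unit-triangular block operator matrix on $\HI\times\HI$ has norm at most $1$ plus the norm of its off-diagonal block (obtained by writing it as the identity plus its strictly triangular part and applying the triangle inequality). The transfer of invertibility, namely that $\lambda\in\rho(A_1,B_1)\cap\rho(S_2(\lambda),B_4)$ forces $\lambda\in\rho(\mathcal{A},\mathcal{B})$, is supplied directly by Theorem~\ref{fbsthm1}, so no extra argument is needed there.
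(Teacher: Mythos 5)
Your proof is correct and follows essentially the same route as the paper's: both rest on the generalized Frobenius--Schur factorization along the second complement from Theorem~\ref{fbsthm1}, the triangle-inequality bounds $1+\delta_1$ and $1+\delta_2$ for the unit-triangular outer factors, and the $\max$ bound for the block-diagonal middle factor. The only difference is presentational — you run the argument as a contrapositive, which if anything handles the $\sigma(\mathcal{A},\mathcal{B})$ portion of the pseudospectrum a bit more explicitly than the paper's direct version, but the mathematical content is identical.
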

\begin{proof}
Suppose $(\mathcal{A}, \mathcal{B})$ satisfies \textbf{(H1)}-\textbf{(H4)} and $\lambda\in \rho(\mathcal{A}, \mathcal{B})$. From Theorem \ref{fbsthm1}, $\lambda \in \rho(A_1,B_1)\cap \rho (S_2(\lambda), B_4)$ and 
\[
\left(\lambda\mathcal{B}-\mathcal{A}\right)^{-1} 
= \begin{pmatrix}I& -G_2(\lambda)\\ &I\end{pmatrix} \begin{pmatrix}\left(\lambda B_1-A_1\right)^{-1} & \\& \left(\lambda B_4-S_2(\lambda)\right)^{-1}\end{pmatrix} \begin{pmatrix}I&\\- F_2(\lambda) &I\end{pmatrix},
\]
where $G_2(\lambda)= \left(\lambda B_1-A_1\right)^{-1}\left(\lambda B_2-A_2\right)$ and $F_2(\lambda)= \left(\lambda B_3-A_3\right)\left(\lambda B_1-A_1\right)^{-1}$. Then
\begin{align*}
\left\|\begin{pmatrix}I& -\left(\lambda B_1-A_1\right)^{-1}\left(\lambda B_2-A_2\right)\\ &I\end{pmatrix}\right\|&\leq  \left\|\begin{pmatrix}I&\\&I\end{pmatrix}\right\|+ \left\|\begin{pmatrix}&&-\left(\lambda B_1-A_1\right)^{-1}\left(\lambda B_2-A_2\right)\\ &&\end{pmatrix}\right\|\\
&\leq 1 +\left\|\left(\lambda B_1-A_1\right)^{-1}\left(\lambda B_2-A_2\right)\right\|,
\end{align*}
and
\begin{align*}
\left\|\begin{pmatrix}I&\\-\left(\lambda B_3-A_3\right)\left(\lambda B_1-A_1\right)^{-1} &I\end{pmatrix}\right\|&\leq  \left\|\begin{pmatrix}I& \\ &I\end{pmatrix}\right\|+ \left\|\begin{pmatrix} &&\\ -\left(\lambda B_3-A_3\right)\left(\lambda B_1-A_1\right)^{-1}&& \end{pmatrix}\right\| \\
&\leq 1 + \left\|\left(\lambda B_3-A_3\right)\left(\lambda B_1-A_1\right)^{-1}\right\|.
\end{align*}
Denote $\delta_1= \left\|\left(\lambda B_1-A_1\right)^{-1}\left(\lambda B_2-A_2\right)\right\|$ and $\delta_2= \left\|\left(\lambda B_3-A_3\right)\left(\lambda B_1-A_1\right)^{-1}\right\|$. Then
\[
\left\|\left(\lambda \mathcal{B}-\mathcal{A}\right)^{-1}\right\| \leq \left(1+\delta_1\right)\left(1+\delta_2\right) \max \left\lbrace\left\|\left(\lambda B_1-A_1\right)^{-1}\right\|, \left\|\left(\lambda B_4-S_2(\lambda)\right)^{-1}\right\| \right\rbrace.
\]
Thus for $\e>0$,
\[
\Lambda_{\e}(\mathcal{A},\mathcal{B}) \subseteq \left(\Lambda_{\e (1+\delta_1)(1+\delta_2)}\left(A_1, B_1\right)\right) \cup \left(\Lambda_{\e (1+\delta_1)(1+\delta_2)}\left(S_2(\lambda),B_4\right)\right).
\]
\end{proof}
\begin{theorem}\label{pseu2}
Let the block operator matrix pencil $(\mathcal{A}, \mathcal{B})$ satisfies \textnormal{\textbf{(L1)}}-\textnormal{\textbf{(L4)}}. Then for $\e>0$,
\[
\Lambda_{\e}(\mathcal{A},\mathcal{B}) \subseteq \left(\Lambda_{\e (1+\eta_1)(1+\eta_2)}\left(A_4, B_4\right)\right) \cup \left(\Lambda_{\e (1+\eta_1)(1+\eta_2)}\left(S_1(\lambda),B_1\right)\right),
\]
where $\eta_1= \left\|\left(\lambda B_4-A_4\right)^{-1}\left(\lambda B_3-A_3\right)\right\|$ and $\eta_2= \left\|\left(\lambda B_2-A_2\right)\left(\lambda B_4-A_4\right)^{-1}\right\|$.
\end{theorem}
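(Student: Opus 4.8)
The plan is to mirror the argument of Theorem \ref{pseu1}, but now exploiting the generalized Frobenius-Schur factorization along the \emph{first} complement rather than the second. Assume $(\mathcal{A},\mathcal{B})$ satisfies \textbf{(L1)}--\textbf{(L4)} and fix $\lambda\in \rho(\mathcal{A},\mathcal{B})$. By Theorem \ref{fbsthm2} this guarantees $\lambda\in \rho(S_1(\lambda),B_1)$ together with the explicit block-triangular representation
\[
\left(\lambda\mathcal{B}-\mathcal{A}\right)^{-1}
= \begin{pmatrix}I&\\-G_1(\lambda)&I\end{pmatrix} \begin{pmatrix}\left(\lambda B_1-S_1(\lambda)\right)^{-1}&\\ &\left(\lambda B_4-A_4\right)^{-1}\end{pmatrix} \begin{pmatrix}I&-F_1(\lambda)\\&I\end{pmatrix},
\]
where $G_1(\lambda)=\left(\lambda B_4-A_4\right)^{-1}\left(\lambda B_3-A_3\right)$ and $F_1(\lambda)=\left(\lambda B_2-A_2\right)\left(\lambda B_4-A_4\right)^{-1}$.

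First I would bound the two unipotent triangular factors by the triangle inequality exactly as in the proof of Theorem \ref{pseu1}: the lower-triangular factor containing $-G_1(\lambda)$ has norm at most $1+\|G_1(\lambda)\|=1+\eta_1$, and the upper-triangular factor containing $-F_1(\lambda)$ has norm at most $1+\|F_1(\lambda)\|=1+\eta_2$. Next I would bound the central diagonal factor by the larger of its two block-resolvent norms. Multiplying these three estimates gives
\[
\left\|\left(\lambda\mathcal{B}-\mathcal{A}\right)^{-1}\right\|\leq (1+\eta_1)(1+\eta_2)\,\max\left\{\left\|\left(\lambda B_1-S_1(\lambda)\right)^{-1}\right\|,\left\|\left(\lambda B_4-A_4\right)^{-1}\right\|\right\}.
\]

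Finally, to convert this resolvent estimate into the claimed pseudospectral inclusion, I would take $\lambda\in\Lambda_{\e}(\mathcal{A},\mathcal{B})$. If $\lambda\in\sigma(\mathcal{A},\mathcal{B})$ the inclusion is immediate from Theorem \ref{fbsthm2}; otherwise $\left\|(\lambda\mathcal{B}-\mathcal{A})^{-1}\right\|\geq\e^{-1}$, so the displayed bound forces the maximum above to be at least $\left[\e(1+\eta_1)(1+\eta_2)\right]^{-1}$. Whichever of the two block-resolvent norms attains this maximum places $\lambda$ in the corresponding enlarged pseudospectrum $\Lambda_{\e(1+\eta_1)(1+\eta_2)}(A_4,B_4)$ or $\Lambda_{\e(1+\eta_1)(1+\eta_2)}(S_1(\lambda),B_1)$, which yields the stated union. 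The argument is entirely parallel to Theorem \ref{pseu1}; the only point demanding care is the bookkeeping, namely tracking that under the first-complement factorization the Schur complement $S_1(\lambda)$ is paired with $B_1$ in the $(1,1)$ slot while the unreduced block $(A_4,B_4)$ occupies the $(2,2)$ slot, the reverse of the roles they play in Theorem \ref{pseu1}.
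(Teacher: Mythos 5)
Your proposal is correct and follows essentially the same route as the paper: the paper's proof likewise invokes Theorem \ref{fbsthm2} for the first-complement factorization, bounds the two unipotent factors by $1+\eta_1$ and $1+\eta_2$, bounds the diagonal factor by the maximum of the two block-resolvent norms, and concludes the inclusion. Your write-up is, if anything, slightly more explicit than the paper's (which ends with ``Hence the result follows''), particularly in spelling out the final conversion of the resolvent bound into the pseudospectral inclusion and the case $\lambda\in\sigma(\mathcal{A},\mathcal{B})$.
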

\begin{proof}
Suppose $(\mathcal{A}, \mathcal{B})$ satisfies \textbf{(L1)}-\textbf{(L4)} and $\lambda\in \rho(\mathcal{A}, \mathcal{B})$. From Theorem \ref{fbsthm2}, $\lambda \in \rho(A_4,B_4)\cap \rho (S_1(\lambda), B_1)$ and
\[
\left(\lambda\mathcal{B}-\mathcal{A}\right)^{-1} 
= \begin{pmatrix}I&\\-G_1(\lambda)&I\end{pmatrix} \begin{pmatrix}\left(\lambda B_1-S_1(\lambda)\right)^{-1} & \\ & \left(\lambda B_4-A_4\right)^{-1}\end{pmatrix} \begin{pmatrix}I& - F_1(\lambda)\\ &I\end{pmatrix},
\]
where $G_1(\lambda)= \left(\lambda B_4-A_4\right)^{-1}\left(\lambda B_3-A_3\right)$ and $F_1(\lambda)= \left(\lambda B_2-A_2\right)\left(\lambda B_4-A_4\right)^{-1}$. Denote $\eta_1= \left\|\left(\lambda B_4-A_4\right)^{-1}\left(\lambda B_3-A_3\right)\right\|$ and $\eta_2= \left\|\left(\lambda B_2-A_2\right)\left(\lambda B_4-A_4\right)^{-1}\right\|$. Then
\[
\left\|\left(\lambda \mathcal{B}-\mathcal{A}\right)^{-1}\right\| \leq \left(1+\eta_1\right)\left(1+\eta_2\right) \max \left\lbrace\left\|\left(\lambda B_4-A_4\right)^{-1}\right\|, \left\|\left(\lambda B_1-S_1(\lambda)\right)^{-1}\right\| \right\rbrace.
\]
Hence the result follows.
\end{proof}
The following theorems use the generalized Frobenius-Schur factorizations to identify the $(n,\e)$-pseudospectral inclusions of the $2 \times 2$ unbounded block operator matrix pencil.
\begin{theorem}\label{npseu1}
Let the block operator matrix pencil $(\mathcal{A}, \mathcal{B})$ satisfies \textnormal{\textbf{(H1)}}-\textnormal{\textbf{(H4)}}, $n\in \mathbb{Z}_{+}$, and $\e>0$. Further $G_2(\lambda)F_2(\lambda)= F_2(\lambda)G_2(\lambda)= 0$, $G_2(\lambda)(\lambda B_4-S_2(\lambda))^{-1}= (\lambda B_1-A_1)^{-1} G_2(\lambda)$, and $F_2(\lambda)(\lambda B_1-A_1)^{-1}= (\lambda B_4-S_2(\lambda))^{-1} F_2(\lambda)$ for every $\lambda\in \rho(\mathcal{A},\mathcal{B})$. Then
\[
\Lambda_{n,\e}(\mathcal{A},\mathcal{B}) \subseteq \left(\Lambda_{n,\e \left(1+\delta_1^{2^n}\right)^\frac{1}{2^n}\left(1+\delta_2^{2^n}\right)^\frac{1}{2^n}}\left(A_1, B_1\right)\right) \cup \left(\Lambda_{n,\e \left(1+\delta_1^{2^n}\right)^\frac{1}{2^n}\left(1+\delta_2^{2^n}\right)^\frac{1}{2^n}}\left(S_2(\lambda),B_4\right)\right),
\]
where $\delta_1= \left\|\left(\lambda B_1-A_1\right)^{-1}\left(\lambda B_2-A_2\right)\right\|$ and $\delta_2= \left\|\left(\lambda B_3-A_3\right)\left(\lambda B_1-A_1\right)^{-1}\right\|$.
\end{theorem}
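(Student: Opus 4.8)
The plan is to mirror the proof of Theorem \ref{pseu1}, but with the resolvent replaced by its $2^n$-th power, and to let the three extra commutation hypotheses keep that power in closed form. By Theorem \ref{fbsthm1}, for $\lambda\in\rho(\mathcal{A},\mathcal{B})$ the resolvent factors as $(\lambda\mathcal{B}-\mathcal{A})^{-1}=P\,D\,Q$ (suppressing $\lambda$), where $P=\begin{pmatrix} I & -G_2 \\ & I\end{pmatrix}$, $Q=\begin{pmatrix} I & \\ -F_2 & I\end{pmatrix}$, and $D=\begin{pmatrix} R_1 & \\ & R_4\end{pmatrix}$ with $R_1=(\lambda B_1-A_1)^{-1}$, $R_4=(\lambda B_4-S_2(\lambda))^{-1}$. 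In this notation the hypotheses read $G_2F_2=F_2G_2=0$, $G_2R_4=R_1G_2$, and $F_2R_1=R_4F_2$. The first thing I would record is that these intertwining relations are exactly what forces $(QP)D=D(QP)$, while $G_2F_2=F_2G_2=0$ annihilates every product of the two off-diagonal blocks.

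The first main step is to compute $(\lambda\mathcal{B}-\mathcal{A})^{-2^n}=(PDQ)^{2^n}$ explicitly. I would regroup $(PDQ)^{2^n}=P\,[D(QP)]^{2^n-1}\,DQ$, then commute every $D$ past $QP$ using $(QP)D=D(QP)$, and at each stage clear the cross terms with $G_2F_2=F_2G_2=0$. A short induction on the exponent should collapse the result into a factored block form $\widehat P\,\widehat D\,\widehat Q$ with $\widehat D=\begin{pmatrix} R_1^{2^n} & \\ & R_4^{2^n}\end{pmatrix}$ and $\widehat P,\widehat Q$ block-triangular; the commutation relations are precisely what guarantees that the diagonal becomes exactly $\mathrm{diag}(R_1^{2^n},R_4^{2^n})$ and that only the two ``pure'' off-diagonal contributions survive.

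The second step is the norm estimate. From the factored form, submultiplicativity gives $\|(\lambda\mathcal{B}-\mathcal{A})^{-2^n}\|\le\|\widehat P\|\,\|\widehat Q\|\,\max\{\|R_1^{2^n}\|,\|R_4^{2^n}\|\}$, where $\|\widehat P\|$ and $\|\widehat Q\|$ are controlled through the triangular structure by $\delta_1=\|G_2\|$ and $\delta_2=\|F_2\|$. Taking $2^n$-th roots converts $\max\{\|R_1^{2^n}\|,\|R_4^{2^n}\|\}^{1/2^n}$ into $\max\{\|R_1^{2^n}\|^{1/2^n},\|R_4^{2^n}\|^{1/2^n}\}$, and these are exactly the quantities whose size at $\lambda$ decides membership in $\Lambda_{n,\cdot}(A_1,B_1)$ and $\Lambda_{n,\cdot}(S_2(\lambda),B_4)$. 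Feeding the resulting inequality into the definition of $\Lambda_{n,\e}(\mathcal{A},\mathcal{B})$ and splitting according to which of the two resolvent powers dominates then produces the claimed union, just as the $\max$ produced the union in Theorem \ref{pseu1}.

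The hard part will be the power computation, and in particular pinning down the constant multiplying the off-diagonal blocks of $(PDQ)^{2^n}$: the intertwining relations move the $D$'s past $QP$ cleanly, but each pass deposits an additive off-diagonal contribution, so the bookkeeping of these contributions across $2^n$ factors is the delicate point that fixes the scaling constant $\big(1+\delta_1^{2^n}\big)^{1/2^n}\big(1+\delta_2^{2^n}\big)^{1/2^n}$. I would check the base case $n=0$ against Theorem \ref{pseu1} and verify $n=1$ by hand before committing to the general induction.
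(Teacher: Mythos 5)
Your outline follows the same route as the paper's proof: use Theorem \ref{fbsthm1} to write $(\lambda\mathcal{B}-\mathcal{A})^{-1}=PDQ$ (in your notation $P$, $Q$ unipotent triangular with off-diagonal blocks $-G_2(\lambda)$, $-F_2(\lambda)$, and $D=\mathrm{diag}(R_1,R_4)$ with $R_1=(\lambda B_1-A_1)^{-1}$, $R_4=(\lambda B_4-S_2(\lambda))^{-1}$), observe that $G_2F_2=F_2G_2=0$ gives $PQ=QP$ while the two intertwining relations give $PD=DP$ and $QD=DQ$, deduce $(\lambda\mathcal{B}-\mathcal{A})^{-2^n}=P^{2^n}D^{2^n}Q^{2^n}$, bound the norms of the triangular factors, take $2^n$-th roots, and split the maximum into the union. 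All of that matches the paper.

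The step you deferred is exactly where the argument, as a proof of the \emph{stated} theorem, fails --- and your own remark that ``each pass deposits an additive off-diagonal contribution'' is the reason. For a unipotent triangular block operator,
\[
\begin{pmatrix} I & X \\ & I \end{pmatrix}^{k}=\begin{pmatrix} I & kX \\ & I \end{pmatrix},
\]
so $P^{2^n}=\begin{pmatrix} I & -2^{n}G_2(\lambda) \\ & I \end{pmatrix}$ and $Q^{2^n}=\begin{pmatrix} I & \\ -2^{n}F_2(\lambda) & I \end{pmatrix}$; equivalently, $(\lambda\mathcal{B}-\mathcal{A})^{-2^n}$ has off-diagonal blocks $-2^{n}R_1^{2^n}G_2(\lambda)$ and $-2^{n}R_4^{2^n}F_2(\lambda)$. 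Running your norm estimate with these factors produces the inclusion with the constant $\left(1+2^{n}\delta_1\right)^{1/2^n}\left(1+2^{n}\delta_2\right)^{1/2^n}$, not the claimed $\left(1+\delta_1^{2^n}\right)^{1/2^n}\left(1+\delta_2^{2^n}\right)^{1/2^n}$. Your proposed $n=1$ hand check would expose precisely this: $(\lambda\mathcal{B}-\mathcal{A})^{-2}$ has off-diagonal block $-2R_1^{2}G_2(\lambda)$, giving $(1+2\delta_1)^{1/2}(1+2\delta_2)^{1/2}$ rather than $(1+\delta_1^{2})^{1/2}(1+\delta_2^{2})^{1/2}$, and these are genuinely different. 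You should know that the paper's own proof goes wrong at this same point, but in the opposite direction: it asserts that the outer factors of $(\lambda\mathcal{B}-\mathcal{A})^{-2^n}$ carry $G_2(\lambda)^{2^n}$ and $F_2(\lambda)^{2^n}$ as off-diagonal entries, i.e., it raises the off-diagonal block to the power $2^n$ instead of multiplying it by $2^n$; that identity is false, and the printed constant is an artifact of that slip. In short, your plan carried out honestly proves the inclusion with $\e\left(1+2^{n}\delta_1\right)^{1/2^n}\left(1+2^{n}\delta_2\right)^{1/2^n}$ in the subscripts, and no completion of this commuting-factorization argument will yield the constant as stated.
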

\begin{proof}
Suppose $\lambda\in \rho(\mathcal{A},\mathcal{B})$, from Theorem \ref{fbsthm1},
\[
\left(\lambda\mathcal{B}-\mathcal{A}\right)^{-1} 
= \begin{pmatrix}I& -G_2(\lambda)\\ &I\end{pmatrix} \begin{pmatrix}\left(\lambda B_1-A_1\right)^{-1} & \\& \left(\lambda B_4-S_2(\lambda)\right)^{-1}\end{pmatrix} \begin{pmatrix}I&\\- F_2(\lambda) &I\end{pmatrix}.
\]
If $G_2(\lambda)F_2(\lambda)= F_2(\lambda)G_2(\lambda)= 0$, $G_2(\lambda)(\lambda B_4-S_2(\lambda))^{-1}= (\lambda B_1-A_1)^{-1} G_2(\lambda)$, and $F_2(\lambda)(\lambda B_1-A_1)^{-1}= (\lambda B_4-S_2(\lambda))^{-1} F_2(\lambda)$, then the decomposition matrices are mutually commutative. For $n\in \mathbb{Z}_+ $,
\[
\left(\lambda\mathcal{B}-\mathcal{A}\right)^{-2^n} 
= \begin{pmatrix}I& G_2(\lambda)^{2^n}\\ &I\end{pmatrix} \begin{pmatrix}\left(\lambda B_1-A_1\right)^{-2^n} & \\& \left(\lambda B_4-S_2(\lambda)\right)^{-2^n}\end{pmatrix} \begin{pmatrix}I&\\F_2(\lambda)^{2^n} &I\end{pmatrix}.
\]
Also
\begin{align*}
\left\|\begin{pmatrix}I& G_2(\lambda)^{2^n}\\ &I\end{pmatrix}\right\|&=
\left\|\begin{pmatrix}I& \left(\left(\lambda B_1-A_1\right)^{-1}\left(\lambda B_2-A_2\right)\right)^{2^n}\\ &I\end{pmatrix}\right\|\\
&\leq  \left\|\begin{pmatrix}I&\\&I\end{pmatrix}\right\|+ \left\|\begin{pmatrix}&&\left(\left(\lambda B_1-A_1\right)^{-1}\left(\lambda B_2-A_2\right)\right)^{2^n}\\ &&\end{pmatrix}\right\|\\
&\leq 1 +\left\|\left(\left(\lambda B_1-A_1\right)^{-1}\left(\lambda B_2-A_2\right)\right)^{2^n}\right\|.
\end{align*}
Similarly
\begin{align*}
\left\|\begin{pmatrix}I&\\ F_2(\lambda)^{2^n} &I\end{pmatrix}\right\|&= \begin{pmatrix}I& \\\left(\left(\lambda B_3-A_3\right)\left(\lambda B_1-A_1\right)^{-1}\right)^{2^n} &I\end{pmatrix}\\
&\leq  \left\|\begin{pmatrix}I& \\ &I\end{pmatrix}\right\|+ \left\|\begin{pmatrix} &&\\ \left(\left(\lambda B_3-A_3\right)\left(\lambda B_1-A_1\right)^{-1}\right)^{2^n}&& \end{pmatrix}\right\| \\
&\leq 1 + \left\| \left(\left(\lambda B_3-A_3\right)\left(\lambda B_1-A_1\right)^{-1}\right)^{2^n}\right\|.
\end{align*}
Denote $\delta_1= \left\|\left(\lambda B_1-A_1\right)^{-1}\left(\lambda B_2-A_2\right)\right\|$ and $ \delta_2= \left\|\left(\lambda B_3-A_3\right)\left(\lambda B_1-A_1\right)^{-1}\right\|$. Then
\begin{align*}
\left\|\left(\left(\lambda B_1-A_1\right)^{-1}\left(\lambda B_2-A_2\right)\right)^{2^n}\right\|&\leq \left\|\left(\lambda B_1-A_1\right)^{-1}\left(\lambda B_2-A_2\right)\right\|^{2^n}=\delta_1^{2^n},\\
\left\| \left(\left(\lambda B_3-A_3\right)\left(\lambda B_1-A_1\right)^{-1}\right)^{2^n}\right\|&\leq \left\| \left(\lambda B_3-A_3\right)\left(\lambda B_1-A_1\right)^{-1}\right\|^{2^n}=\delta_2^{2^n}.
\end{align*}
Combining the above inequalities,
\[
\left\|\left(\lambda \mathcal{B}-\mathcal{A}\right)^{-2^n}\right\| \leq \left(1+\delta_1^{2^n}\right)\left(1+\delta_2^{2^n}\right) \max \left\lbrace\left\|\left(\lambda B_1-A_1\right)^{-{2^n}}\right\|, \left\|\left(\lambda B_4-S_2(\lambda)\right)^{-{2^n}}\right\| \right\rbrace.
\]
Hence for $n\in \mathbb{Z}_+$ and $\e>0$,
\[
\Lambda_{n,\e}(\mathcal{A},\mathcal{B}) \subseteq \left(\Lambda_{n,\e \left(1+\delta_1^{2^n}\right)^\frac{1}{2^n}\left(1+\delta_2^{2^n}\right)^\frac{1}{2^n}}\left(A_1, B_1\right)\right) \cup \left(\Lambda_{n,\e \left(1+\delta_1^{2^n}\right)^\frac{1}{2^n}\left(1+\delta_2^{2^n}\right)^\frac{1}{2^n}}\left(S_2(\lambda),B_4\right)\right).
\]
\end{proof}
\begin{theorem}\label{npseu2}
Let the block operator matrix pencil $(\mathcal{A}, \mathcal{B})$ satisfies \textnormal{\textbf{(L1)}}-\textnormal{\textbf{(L4)}}, $n\in \mathbb{Z}_+ $, and $\e>0$. Further $G_1(\lambda) F_1(\lambda)= F_1(\lambda) G_1(\lambda)= 0$, $\left(\lambda B_4-A_4\right)^{-1} G_1(\lambda)= G_1(\lambda) \left(\lambda B_1-S_1(\lambda)\right)^{-1}$, and $\left(\lambda B_1-S_1(\lambda)\right)^{-1}F_1(\lambda)= F_1(\lambda)\left(\lambda B_4-A_4\right)^{-1}$ for every $\lambda\in \rho(\mathcal{A}, \mathcal{B})$. Then
\[
\Lambda_{n,\e}(\mathcal{A},\mathcal{B}) \subseteq \left(\Lambda_{n,\e \left(1+\eta_1^{2^n}\right)^\frac{1}{2^n}\left(1+\eta_2^{2^n}\right)^\frac{1}{2^n}}\left(A_4, B_4\right)\right) \cup \left(\Lambda_{n,\e \left(1+\eta_1^{2^n}\right)^\frac{1}{2^n}\left(1+\eta_2^{2^n}\right)^\frac{1}{2^n}}\left(S_1(\lambda),B_1\right)\right),
\]
where $\eta_1= \left\|\left(\lambda B_4-A_4\right)^{-1}\left(\lambda B_3-A_3\right)\right\|$ and $\eta_2= \left\|\left(\lambda B_2-A_2\right)\left(\lambda B_4-A_4\right)^{-1}\right\|$.
\end{theorem}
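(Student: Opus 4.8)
The plan is to follow the template of the proof of Theorem~\ref{npseu1}, replacing the factorization along the second complement by the one along the first complement furnished by Theorem~\ref{fbsthm2}. First I would fix $\lambda\in\rho(\mathcal{A},\mathcal{B})$; under \textbf{(L1)}--\textbf{(L4)} Theorem~\ref{fbsthm2} gives $\lambda\in\rho(A_4,B_4)\cap\rho(S_1(\lambda),B_1)$ together with
\[
\left(\lambda\mathcal{B}-\mathcal{A}\right)^{-1}
=\begin{pmatrix}I&\\-G_1(\lambda)&I\end{pmatrix}
\begin{pmatrix}\left(\lambda B_1-S_1(\lambda)\right)^{-1}&\\&\left(\lambda B_4-A_4\right)^{-1}\end{pmatrix}
\begin{pmatrix}I&-F_1(\lambda)\\&I\end{pmatrix},
\]
where $G_1(\lambda)=\left(\lambda B_4-A_4\right)^{-1}\left(\lambda B_3-A_3\right)$ and $F_1(\lambda)=\left(\lambda B_2-A_2\right)\left(\lambda B_4-A_4\right)^{-1}$, so that $\eta_1=\|G_1(\lambda)\|$ and $\eta_2=\|F_1(\lambda)\|$.

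Next, using the three hypotheses $G_1(\lambda)F_1(\lambda)=F_1(\lambda)G_1(\lambda)=0$, $\left(\lambda B_4-A_4\right)^{-1}G_1(\lambda)=G_1(\lambda)\left(\lambda B_1-S_1(\lambda)\right)^{-1}$, and $\left(\lambda B_1-S_1(\lambda)\right)^{-1}F_1(\lambda)=F_1(\lambda)\left(\lambda B_4-A_4\right)^{-1}$, I would verify that the three factor matrices commute pairwise, the vanishing of $G_1F_1$ and $F_1G_1$ being exactly what annihilates the mixed corner terms. Consequently $\left(\lambda\mathcal{B}-\mathcal{A}\right)^{-2^n}$ factors in the analogous block form
\[
\left(\lambda\mathcal{B}-\mathcal{A}\right)^{-2^n}
=\begin{pmatrix}I&\\G_1(\lambda)^{2^n}&I\end{pmatrix}
\begin{pmatrix}\left(\lambda B_1-S_1(\lambda)\right)^{-2^n}&\\&\left(\lambda B_4-A_4\right)^{-2^n}\end{pmatrix}
\begin{pmatrix}I&F_1(\lambda)^{2^n}\\&I\end{pmatrix}.
\]

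Then I would estimate the two block-triangular factors by the triangle inequality and submultiplicativity, namely $\left\|\begin{pmatrix}I&\\G_1(\lambda)^{2^n}&I\end{pmatrix}\right\|\leq 1+\|G_1(\lambda)\|^{2^n}=1+\eta_1^{2^n}$ and likewise $1+\eta_2^{2^n}$ for the upper factor, while the diagonal factor contributes $\max\{\|(\lambda B_1-S_1(\lambda))^{-2^n}\|,\|(\lambda B_4-A_4)^{-2^n}\|\}$. Multiplying these yields
\[
\left\|\left(\lambda\mathcal{B}-\mathcal{A}\right)^{-2^n}\right\|
\leq\left(1+\eta_1^{2^n}\right)\left(1+\eta_2^{2^n}\right)
\max\left\{\left\|\left(\lambda B_1-S_1(\lambda)\right)^{-2^n}\right\|,\left\|\left(\lambda B_4-A_4\right)^{-2^n}\right\|\right\}.
\]
Taking $2^n$-th roots and comparing with the definition of $\Lambda_{n,\e}$, a point $\lambda$ at which $\|(\lambda\mathcal{B}-\mathcal{A})^{-2^n}\|^{1/2^n}\geq\e^{-1}$ forces one of the two scalar-pencil resolvent norms to be at least $\e^{-1}\big[(1+\eta_1^{2^n})(1+\eta_2^{2^n})\big]^{-1/2^n}$, which places $\lambda$ in the corresponding enlarged $(n,\e)$-pseudospectrum and gives the stated inclusion.

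The hard part will be the second step: confirming that the $2^n$-fold product collapses to the displayed block form. This requires propagating the three commutation relations through every intermediate factor and checking that all corner terms carrying a factor $G_1F_1$ or $F_1G_1$ vanish at each stage. Once the factored form is secured, the norm estimates and the passage to the $(n,\e)$-pseudospectrum are routine and run exactly parallel to Theorem~\ref{npseu1}.
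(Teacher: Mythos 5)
You follow the paper's own route exactly: the paper's proof of this theorem consists of invoking Theorem \ref{fbsthm2}, asserting that the hypotheses make the three factors mutually commutative, and then deferring to the proof of Theorem \ref{npseu1}; your proposal is precisely that proof transplanted to the first complement. The genuine problem is the step you yourself flag as ``the hard part'' and leave unverified: the collapse of $\left(\lambda\mathcal{B}-\mathcal{A}\right)^{-2^n}$ to a block form with corner entries $G_1(\lambda)^{2^n}$ and $F_1(\lambda)^{2^n}$. That identity is false, for a structural reason that no amount of propagating the commutation relations can repair: powers of unipotent block-triangular operators are \emph{linear} in the exponent, they are not operator powers. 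Writing $L=\begin{pmatrix}I&\\-G_1(\lambda)&I\end{pmatrix}$, $D=\begin{pmatrix}\left(\lambda B_1-S_1(\lambda)\right)^{-1}&\\&\left(\lambda B_4-A_4\right)^{-1}\end{pmatrix}$, $U=\begin{pmatrix}I&-F_1(\lambda)\\&I\end{pmatrix}$, the hypotheses do give pairwise commutativity, hence $(LDU)^{2^n}=L^{2^n}D^{2^n}U^{2^n}$; but
\[
L^{2^n}=\begin{pmatrix}I&\\-2^{n}G_1(\lambda)&I\end{pmatrix},
\qquad
U^{2^n}=\begin{pmatrix}I&-2^{n}F_1(\lambda)\\&I\end{pmatrix}.
\]
A direct computation confirms this: setting $T_1=\left(\lambda B_1-S_1(\lambda)\right)^{-1}$, $T_2=\left(\lambda B_4-A_4\right)^{-1}$ and using $G_1F_1=F_1G_1=0$, $T_2G_1=G_1T_1$, $T_1F_1=F_1T_2$, induction on $k$ gives
\[
\left(\lambda\mathcal{B}-\mathcal{A}\right)^{-k}
=\begin{pmatrix}T_1^{k}&-k\,F_1(\lambda)T_2^{k}\\-k\,T_2^{k}G_1(\lambda)&T_2^{k}\end{pmatrix},
\]
so the corner terms grow like $2^{n}$ and never become $G_1(\lambda)^{2^n}$, $F_1(\lambda)^{2^n}$.

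Consequently, what your argument actually proves is
\[
\left\|\left(\lambda\mathcal{B}-\mathcal{A}\right)^{-2^n}\right\|
\leq\left(1+2^{n}\eta_1\right)\left(1+2^{n}\eta_2\right)
\max\left\{\left\|T_1^{2^n}\right\|,\left\|T_2^{2^n}\right\|\right\},
\]
i.e.\ the inclusion with radius $\e\left(1+2^{n}\eta_1\right)^{\frac{1}{2^n}}\left(1+2^{n}\eta_2\right)^{\frac{1}{2^n}}$ in place of the stated $\e\left(1+\eta_1^{2^n}\right)^{\frac{1}{2^n}}\left(1+\eta_2^{2^n}\right)^{\frac{1}{2^n}}$. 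Since $2^{n}\eta>\eta^{2^n}$ whenever $0<\eta<1$ (indeed whenever $\eta<(2^n)^{1/(2^n-1)}$), the stated radius is the smaller one, so the stated, stronger inclusion does not follow from this method. To be fair, this defect is not yours alone: it is exactly the computation in the paper's proof of Theorem \ref{npseu1}, to which the paper's proof of this theorem defers, so you have reproduced the paper's argument faithfully, error included. But had you carried out the verification you deferred, you would have found the corners $-2^{n}G_1(\lambda)$, $-2^{n}F_1(\lambda)$; the honest outcome of this approach is the theorem with constants $\left(1+2^{n}\eta_i\right)^{\frac{1}{2^n}}$, and obtaining the constants as stated would require a genuinely different argument.
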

\begin{proof}
Suppose $\lambda\in \rho(\mathcal{A},\mathcal{B})$, from Theorem \ref{fbsthm2},
\[
\left(\lambda\mathcal{B}-\mathcal{A}\right)^{-1} 
= \begin{pmatrix}I&\\-G_1(\lambda)&I\end{pmatrix} \begin{pmatrix}\left(\lambda B_1-S_1(\lambda)\right)^{-1} & \\ & \left(\lambda B_4-A_4\right)^{-1}\end{pmatrix} \begin{pmatrix}I& - F_1(\lambda)\\ &I\end{pmatrix}.
\]
The conditions assumed in the statement implies the decomposition matrices are mutually commutative. The proof follows similar as Theorem \ref{npseu1}.
\end{proof}
\section{Pseudospectra of the one-dimensional heat operator pencil}
\subsection{Eigenvalue analysis}
Consider the heat transfer through a rod of thermal diffusivity $c^2$ and length $d$. The reformulation of the one-dimensional heat equation subject to the Dirichlet condition at the left and Neumann condition at the right end, along with the variable separable condition, leads to $2\times 2$ block operator matrix pencil $(A, B)$, where
\begin{equation}\label{block}
A= \begin{pmatrix} c^2\cfrac{d^2}{dx^2}&&\\ && \end{pmatrix} \ \ \ \text{and} \ \ \ B= \begin{pmatrix}I&\\ \cfrac{d}{dx} &-I\end{pmatrix}.
\end{equation}
Let $\lambda$ be a generalized eigenvalue and $\psi= \begin{pmatrix}\psi_1\\\psi_2 \end{pmatrix}$ be the corresponding generalized eigenfunction of the heat operator pencil, then the boundary conditions give $\psi_1(0)= \psi_2(d)= 0$. The adjoint of $A$ and $B$ are
\[
A^*= \begin{pmatrix} c^2\cfrac{d^2}{dx^2} &&\\ && \end{pmatrix} \ \ \ \text{and} \ \ \ B^*= \begin{pmatrix}I& \cfrac{d}{dx}\\ &-I\end{pmatrix}.
\]
The domains $D(A^*), D(B^*)$ are dense subspaces of Hilbert space $L^2[0,d]\times L^2[0,d]$ defined by
\begin{align*}
D(A^*)&= \left\{v\in L^2[0,d]: v' \ \textnormal{is absolutely continous}, \ v''\in L^2[0,d]\right\}\times L^2[0,d],\\
D(B^*)&= L^2[0,d]\times \left\{v \in L^2[0,d]: v \ \textnormal{is absolutely continous}, \ v'\in L^2[0,d]\right\}.
\end{align*}
The $2\times 2$ block operator matrix pencil $(A, B)$ corresponding to the one-dimensional heat equation is non-self-adjoint. The generalized eigenvalues and the generalized eigenfunctions of $(A, B)$ are determined as follows. Let $\psi=\begin{pmatrix} \psi_1 \\ \psi_2 \end{pmatrix}$, then
\begin{align*}
(\lambda B- A)\psi= \begin{pmatrix}\lambda I- c^2\cfrac{d^2}{dx^2} & \\ \lambda \cfrac{d}{dx}& -\lambda I\end{pmatrix}  \begin{pmatrix} \psi_1\\ \psi_2
\end{pmatrix}.
\end{align*}
Thus $(\lambda B- A)\psi= 0$ gives two simultaneous differential equations
\begin{align*}
c^2\cfrac{d^2\psi_1}{dx^2}= \lambda \psi_1 \ \ \ \text{and} \ \ \
\lambda \cfrac{d\psi_1}{dx}= \lambda \psi_2.
\end{align*}
By solving the above system of equations, we get
\[
\psi_1(x) = \left(\alpha e^{\frac{\sqrt{\lambda}x}{c}}+\beta e^{-\frac{\sqrt{\lambda}x}{c}}\right) \qquad \textnormal{and} \qquad \psi_2(x) =\frac{\sqrt{\lambda}}{c} \left(\alpha e^{\frac{\sqrt{\lambda}x}{c}}-\beta e^{-\frac{\sqrt{\lambda}x}{c}}\right),
\]
for some arbitary constants $\alpha, \beta$. The boundary conditions are $\psi_1(0)= \psi_2(d)= 0$ and hence $\lambda=0$ can not be an eigenvalue. The boundary condition $\psi_1(0)= 0$ implies
\begin{equation*}\label{egnfn}
\psi(x)=\begin{pmatrix}
\sinh \frac{\sqrt{\lambda}x}{c} \\ \frac{\sqrt{\lambda}}{c}\, \cosh \frac{\sqrt{\lambda}x}{c} 
\end{pmatrix},
\end{equation*} 
where $\lambda$ is the non-zero generalized eigenvalue of the operator pencil $(A,B)$. The condition $\psi_2(d)= 0$ implies $\cosh \frac{\sqrt{\lambda}d}{c}= 0$ and 
\[
 \sqrt{\lambda_n}= c\left(n+\frac{1}{2}\right)\frac{\pi i}{d}.
\]
Hence the generalized eigenvalues of the operator pencil $(A, B)$ are
\begin{equation*}\label{egn}
\lambda_n= -c^2\left(n+\frac{1}{2}\right)^2 \left(\frac{\pi}{d}\right)^2,
\end{equation*}
and the corresponding generalized eigenfunctions are
\[
\psi^n(x)=\begin{pmatrix}
\sinh \frac{\sqrt{\lambda_n}x}{c} \\ \frac{\sqrt{\lambda_n}}{c}\, \cosh \frac{\sqrt{\lambda_n}x}{c}
\end{pmatrix}.
\]
\subsection{Psuedospectral analysis}
This section identifies the pseudospectral enclosure of the heat operator pencil $(A, B)$ using Theorem \ref{pseu1}. Denote $D= \cfrac{d}{dx}$. The generalized Frobenius-Schur factorization along the first complement decomposes the heat operator pencil $(A, B)$ into
\[
\lambda B-A= \begin{pmatrix}
I&\\&I
\end{pmatrix} \begin{pmatrix}
\lambda I-c^2D^2&\\&-\lambda I
\end{pmatrix} \begin{pmatrix}
I&\\-D&I
\end{pmatrix}.
\]
The generalized Frobenius-Schur factorization along the second complement decomposes the heat operator pencil $(A, B)$ into
\[
\lambda B-A= \begin{pmatrix} I&\\ \lambda D\left(\lambda I-c^2D^2\right)^{-1}&   I\end{pmatrix} \begin{pmatrix} 
\lambda I-c^2D^2&\\ &-\lambda I \end{pmatrix} \begin{pmatrix}
I&\\&I
\end{pmatrix}.
\]
\begin{theorem}\label{thm1}
Let $(A, B)$ be the heat operator pencil and $\e>0$. Then
\[
\Lambda_{\e}(A,B)\subseteq c^2\sigma(D^2)+\Delta_{\e(1+\delta_1)},
\]
where $\delta_1= |\lambda|\left\|D\left(\lambda I-c^2D^2 \right)^{-1}\right\|$ and $\Delta_{\e(1+\delta_1)}= \{z\in \C: |z|<\e(1+ \delta_1)\}$.
\end{theorem}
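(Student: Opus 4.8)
The plan is to read off the block entries of the heat pencil and apply Theorem \ref{pseu1} (the second-complement estimate), after which the first diagonal block collapses to the self-adjoint operator $c^2D^2$, whose pseudospectrum is controlled by Corollary \ref{coro1}. Writing $A_1=c^2D^2$, $A_2=A_3=A_4=\mathbf{0}$, $B_1=I$, $B_2=\mathbf{0}$, $B_3=D$, and $B_4=-I$, the second-complement factorization displayed just before the statement is exactly (\ref{fbs1}) for these entries. First I would record the Schur quantities. Since $\lambda B_2-A_2=\mathbf{0}$, both $G_2(\lambda)=(\lambda B_1-A_1)^{-1}(\lambda B_2-A_2)$ and $S_2(\lambda)=(\lambda B_3-A_3)(\lambda B_1-A_1)^{-1}(\lambda B_2-A_2)+A_4$ vanish, whereas $F_2(\lambda)=(\lambda B_3-A_3)(\lambda B_1-A_1)^{-1}=\lambda D(\lambda I-c^2D^2)^{-1}$. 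Thus, in the notation of Theorem \ref{pseu1}, $\delta_1=\|G_2(\lambda)\|=0$ while $\delta_2=\|F_2(\lambda)\|=|\lambda|\,\|D(\lambda I-c^2D^2)^{-1}\|$; the latter is precisely the quantity denoted $\delta_1$ in the present statement. This relabeling (the general $\delta_2$ becoming the local $\delta_1$) is the only place where one must watch the doubled notation.

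With these identifications Theorem \ref{pseu1} gives
\[
\Lambda_{\e}(A,B)\subseteq \Lambda_{\e(1+\delta_1)}\!\left(c^2D^2,I\right)\cup \Lambda_{\e(1+\delta_1)}\!\left(S_2(\lambda),B_4\right),
\]
and since $B_1=I$ the first set is just $\Lambda_{\e(1+\delta_1)}(c^2D^2)$ by item (2) of the Remark. To turn this first piece into a spectral enclosure I would verify that $c^2D^2$, carrying the boundary data inherited from the pencil—Dirichlet at $0$ and, through $\psi_2=\psi_1'$, Neumann at $d$—is self-adjoint, with $\sigma(c^2D^2)=c^2\sigma(D^2)=\{\lambda_n\}$ as computed in Subsection 4.1. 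Corollary \ref{coro1} then yields
\[
\Lambda_{\e(1+\delta_1)}(c^2D^2)\subseteq \sigma(c^2D^2)+\Delta_{\e(1+\delta_1)}=c^2\sigma(D^2)+\Delta_{\e(1+\delta_1)}.
\]

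It remains to treat the second block. Here $S_2(\lambda)=\mathbf{0}$ and $B_4=-I$, so $\lambda B_4-S_2(\lambda)=-\lambda I$ and $\Lambda_{\e(1+\delta_1)}(\mathbf{0},-I)$ is the disk about the origin of radius $\e(1+\delta_1)$; because $\lambda B-A$ degenerates at $\lambda=0$ one has $0\in\sigma(A,B)$, so this contribution is consistent with the spectral-enclosure picture and is absorbed into it. Collecting the two pieces gives $\Lambda_{\e}(A,B)\subseteq c^2\sigma(D^2)+\Delta_{\e(1+\delta_1)}$.

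Before invoking Theorem \ref{pseu1} the hypotheses \textnormal{\textbf{(H1)}}--\textnormal{\textbf{(H4)}} must be checked; the substantive ones are that $(c^2D^2,I)$ is closed and densely defined with nonempty resolvent set and that $F_2(\lambda)=\lambda D(\lambda I-c^2D^2)^{-1}$ is bounded, i.e.\ that $D$ carries the range of the resolvent of the Dirichlet--Neumann operator boundedly into $L^2[0,d]$. I expect the main obstacle to be exactly this analytic input—establishing self-adjointness of $c^2D^2$ under the mixed boundary data together with the boundedness and resolvent estimates for $D(\lambda I-c^2D^2)^{-1}$—rather than the algebra of the factorization, which is immediate once $A_2=\mathbf{0}$ annihilates $G_2$ and $S_2$.
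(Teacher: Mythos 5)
Your proposal follows exactly the paper's route: the same block identification ($A_1=c^2D^2$, $B_1=I$, $B_3=D$, $B_4=-I$, all other entries zero), the same application of Theorem \ref{pseu1} after noting that $G_2(\lambda)$ and $S_2(\lambda)$ vanish so that only one weight survives, and the same use of Corollary \ref{coro1} on the self-adjoint diagonal block; you even correctly flag the relabeling by which the general $\delta_2$ of Theorem \ref{pseu1} becomes the $\delta_1$ of the present statement. The gap is in how you dispose of the second piece. One must show
\[
\Lambda_{\e(1+\delta_1)}\left(S_2(\lambda),B_4\right)=\Delta_{\e(1+\delta_1)}\subseteq c^2\sigma(D^2)+\Delta_{\e(1+\delta_1)},
\]
and this inclusion holds precisely because $0\in\sigma(D^2)$, which is the fact the paper invokes at the last step. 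Your substitute justification --- that $0\in\sigma(A,B)$, so the disk is ``consistent with the spectral-enclosure picture and is absorbed into it'' --- is a non sequitur: $0\in\sigma(A,B)$ places $0$ in the \emph{left-hand} side of the inclusion being proved, which tells you what the enclosure must contain \emph{if} the theorem is true, but proves nothing about the right-hand side. Worse, it collides with your own identification of the spectrum: you take $\sigma(c^2D^2)=\{\lambda_n\}=\left\{-c^2\left(n+\tfrac12\right)^2(\pi/d)^2\right\}$, a set not containing $0$, and for such a spectrum the absorption fails outright, since points of $\Delta_{\e(1+\delta_1)}$ near its positive-real boundary lie at distance greater than $\e(1+\delta_1)$ from every spectral point. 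So either one works with a realization of $D^2$ whose spectrum contains $0$, as the paper asserts, or the disk cannot be absorbed; your argument as written settles neither.

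A secondary, smaller shortfall: you defer the verification of \textnormal{\textbf{(H1)}}--\textnormal{\textbf{(H4)}}, in particular the boundedness of $F_2(\lambda)=\lambda D\left(\lambda I-c^2D^2\right)^{-1}$, calling it the main obstacle. The paper actually supplies this input: it writes $\left(\lambda I-c^2D^2\right)^{-1}$ explicitly as a Green's-function (Volterra-type) integral operator and deduces
\[
\lambda D\left(\lambda I-c^2D^2\right)^{-1}=\frac{\lambda}{2c}\left[\left(\sqrt{\lambda}I-cD\right)^{-1}+\left(\sqrt{\lambda}I+cD\right)^{-1}\right],
\]
from which boundedness of $F_2(\lambda)$ (and hence the meaning of $\delta_1$) is read off. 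A complete write-up needs that computation, or an equivalent resolvent estimate, before Theorem \ref{pseu1} can legitimately be applied.
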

\begin{proof}
For $\lambda\in \mathbb{C}$, 
\[ 
\lambda B-A= \begin{pmatrix} 
\lambda I-c^2\cfrac{d^2}{dx^2}&\\ \lambda \cfrac{d}{dx}&  -\lambda I
\end{pmatrix} = \begin{pmatrix} 
\lambda I-c^2D^2&\\ \lambda D& -\lambda I
\end{pmatrix}
\]
For $\lambda\in \rho(D^2)$, the generalized Frobenius-Schur factorization along the second complement decomposes the heat operator pencil $(A, B)$ into
\[
\lambda B-A= \begin{pmatrix} I&\\ \lambda D\left(\lambda I-c^2D^2\right)^{-1}&   I\end{pmatrix} \begin{pmatrix} 
\lambda I-c^2D^2&\\ &-\lambda I \end{pmatrix} \begin{pmatrix}
I&\\&I
\end{pmatrix}
\] 
From Theorem \ref{fbs1}, for $\lambda\in \rho(D^2)$,
\[
(\lambda B-A)^{-1}= \begin{pmatrix}
I&\\&I
\end{pmatrix}\begin{pmatrix} 
 (\lambda I-c^2D^2)^{-1}&\\ &-\cfrac{1}{\lambda}I \end{pmatrix}\begin{pmatrix} 
I&\\ -\lambda D\left(\lambda I-c^2D^2\right)^{-1}&I\end{pmatrix}.
\] 
From Green's function, for $f\in L^2[0,d]$,
\[
\left(\lambda I-c^2D^2\right)^{-1}(f)= \frac{1}{2\sqrt{\lambda}c} \int_{x}^{d} \left(e^{\frac{\sqrt{\lambda}(x-t)}{c}}-e^{-\frac{\sqrt{\lambda}(x-t)}{c}}\right) \, f(t) \ dt.
\]
Then 
\begin{align*}
D\left(\lambda I-c^2D^2 \right)^{-1}(f)&= \frac{1}{2c^2} \int_{x}^{d} \left(e^{\frac{\sqrt{\lambda}(x-t)}{c}}+e^{-\frac{\sqrt{\lambda}(x-t)}{c}}\right)\, f(t) \ dt\\
&= \frac{1}{2c} \left[\frac{1}{c}\int_x^d e^{\frac{\sqrt{\lambda}(x-t)}{c}} f(t)\, dt + \frac{1}{c}\int_x^d e^{-\frac{\sqrt{\lambda}(x-t)}{c}} f(t)\, dt \right]\\
&= \frac{1}{2c}\left[\left(\sqrt{\lambda}I-cD\right)^{-1}(f)+ \left(\sqrt{\lambda}I+cD\right)^{-1}(f)\right].
\end{align*}
Hence
\[
\lambda D\left(\lambda I- c^2D^2 \right)^{-1}= \frac{\lambda}{2c}\left[ \left(\sqrt{\lambda}I- cD\right)^{-1}+ \left(\sqrt{\lambda}I+ cD\right)^{-1}\right].
\]
From Theorem \ref{pseu1},
\[
\Lambda_{\e}(A,B) \subseteq \Lambda_{\e (1+\delta_1)}\left(c^2D^2\right) \cup \Lambda_{\e(1+ \delta_1)}({\bf 0})= c^2\,\Lambda_{\frac{\e(1+\delta_1)}{c^2}}\left(D^2\right) \cup \Delta_{ \e(1+ \delta_1)},
\]
where $\delta_1= \left\|\lambda D\left(\lambda I-c^2D^2 \right)^{-1}\right\|$ and $\Delta_{\e(1+\delta_1)}= \left\{z\in \C: |z|\leq \e(1+ \delta_1)\right\}$. Since $D^2$ is self-adjoint and $0\in \sigma(D^2)$, from Corollary \ref{coro1},
\[
c^2\,\Lambda_{\frac{\e(1+\delta_1)}{c^2}}\left(D^2\right) \cup \Delta_{ \e(1+ \delta_1)}= c^2\sigma(D^2)+\Delta_{\e(1+\delta_1)}.
\]
\end{proof}
\begin{proposition}\label{prop1}
Let $v= \begin{pmatrix}v_1\\ v_2 \end{pmatrix}\in L^2[0,d]\times L^2[0,d]$. Define
\begin{align*}
w(x)=\begin{cases} \frac{-v_1(-x)+v_2(-x)}{\sqrt{2}}, & -d \leq x\leq 0,\\
\frac{v_1(x)+v_2(x)}{\sqrt{2}},& 0\leq x\leq d.
\end{cases}
\end{align*}
Then $w\in L^2[-d,d]$ and $\|w\|= \|v\|$.
\end{proposition}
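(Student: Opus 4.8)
The plan is to compute $\|w\|^2$ directly over $[-d,d]$ and verify that it equals $\|v\|^2$; its finiteness will then yield $w\in L^2[-d,d]$ as a byproduct. First I would split the defining integral at the breakpoint $0$,
\[
\|w\|^2=\int_{-d}^{0}\abs{w(x)}^2\,dx+\int_{0}^{d}\abs{w(x)}^2\,dx,
\]
and insert the two branches of the definition of $w$. On $[0,d]$ this contributes $\tfrac{1}{2}\int_0^d\abs{v_1(x)+v_2(x)}^2\,dx$ immediately.

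For the contribution on $[-d,0]$, I would apply the change of variable $y=-x$ (so $dy=-dx$, and the limits $x=-d,0$ become $y=d,0$), which converts $\tfrac{1}{2}\int_{-d}^{0}\abs{-v_1(-x)+v_2(-x)}^2\,dx$ into $\tfrac{1}{2}\int_{0}^{d}\abs{-v_1(y)+v_2(y)}^2\,dy$ over the same interval $[0,d]$. Combining the two pieces gives
\[
\|w\|^2=\frac{1}{2}\int_0^d\Big(\abs{v_1(x)+v_2(x)}^2+\abs{-v_1(x)+v_2(x)}^2\Big)\,dx.
\]

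The decisive step is then the pointwise parallelogram identity $\abs{a+b}^2+\abs{b-a}^2=2\big(\abs{a}^2+\abs{b}^2\big)$ for $a,b\in\C$, applied with $a=v_1(x)$ and $b=v_2(x)$. This collapses the integrand to $2\big(\abs{v_1(x)}^2+\abs{v_2(x)}^2\big)$, and here the factor $\tfrac{1}{\sqrt{2}}$ built into the definition of $w$ is precisely what cancels this $2$. Hence $\|w\|^2=\int_0^d\big(\abs{v_1(x)}^2+\abs{v_2(x)}^2\big)\,dx=\|v\|^2$, which gives both $\|w\|=\|v\|$ and, since $\|v\|<\infty$, the membership $w\in L^2[-d,d]$.

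There is no genuine analytic obstacle here: the content is purely algebraic together with a single change of variable. The only points requiring care are the orientation bookkeeping in the substitution on $[-d,0]$ (the minus sign from $dy=-dx$ reverses the limits back to the standard order) and the observation that $\abs{-v_1(-x)+v_2(-x)}$ is unaffected by the reflection of its argument, so that the two half-interval contributions genuinely align on $[0,d]$ before the parallelogram identity is invoked. In effect, $w$ is obtained from $v$ by a pointwise orthogonal mixing of the two components followed by an $L^2$-isometric reflection, and both operations preserve the norm.
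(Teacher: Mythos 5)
Your proof is correct and takes essentially the same route as the paper: split the integral at $0$, substitute $y=-x$ on $[-d,0]$ to pull both contributions onto $[0,d]$, and collapse the sum via the parallelogram identity (which the paper uses implicitly in its final expansion). Nothing is missing.
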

\begin{proof}
Suppose $v_1, v_2\in L^2[0,d]$ and $w$ be defined as above, then
\begin{align*}
\|w\|^2 &= \int_{-d}^{d} \left|v(x)\right|^2 dx=  \frac{1}{2}\int_{-d}^{0} \left| -v_1(-x)+v_2(-x)\right|^2 dx + \frac{1}{2} \int^{d}_{0} \left| v_1(x)+v_2(x)\right|^2 dx \\
&= -\frac{1}{2}\int_{d}^{0} \left|-v_1(x)+v_2(x)\right|^2 dx + \frac{1}{2} \int^{d}_{0} \left|v_1(x)+v_2(x)\right|^2 dx\\
& = \frac{1}{2} \int^{d}_{0} \left|-v_1(x)+v_2(x)\right|^2 dx + \frac{1}{2}\int^{d}_{0}\left|v_1(x)+v_2(x)\right|^2 dx\\
& = \int^{d}_{0} \left|v_1(x)\right|^2+  \left|v_2(x)\right|^2 dx =  \|v\|^2.
\end{align*}
\end{proof}
The following provides another reformulation of the one-dimensional heat equation.
\begin{remark}
Consider the heat equation
\begin{align*}
\frac{\partial \phi}{\partial t}= c^2\frac{\partial^2 \phi}{\partial x^2}, \ \ 0\leq x\leq d, \ \  t\geq 0, \ \  \phi(0,t)= \phi_x(d,t)= 0.
\end{align*}
Suppose $\phi$ is variable separable and $\phi(x,t)= \phi_1(x)\,\phi_2(t)$. Define  $\xi\in L^2[-d,d]$ by
\[
\xi(x)=\begin{cases} \frac{-\phi_1(-x)+\phi_1'(-x)}{\sqrt{2}}, & -d \leq x\leq 0,\\
\frac{\phi_1(x)+\phi_1'(x)}{\sqrt{2}},& 0\leq x\leq d.
\end{cases}
\]
From Proposition \ref{prop1}, $\|\xi\|= \|\phi_1\|$. For $\phi_1(x)= \xi(x)$ and $\phi_2(t)= e^{\lambda t}$, the system becomes an eigenvalue problem of the form
\[
(\lambda I-L)\xi= 0, \ \ L= c^2\cfrac{d^2}{dx^2}, \ \ \xi(d)= -\xi(-d).
\]
\end{remark}
\subsection{Discretization}  
Consider a grid $0= x_0< x_1< \cdots < x_m= d$, $x_i= i\,\delta x$ and $0= t_0<t_1<\cdots$, $t_j= t_0+j\,\delta_t$. Denote $\phi_{i}^j= \phi(x_i, t_j)$, the forward time and centered space approximation of the one-dimensional heat equation results in
\[
\phi_{i}^{j+1}= \phi_{i}^{j}+\frac{c^2\,\delta t}{(\delta x)^2}\left(\phi_{i+1}^{j}-2\phi_{i}^{j}+\phi_{i-1}^{j}\right),
\]
where $i= 0,1, \ldots, m$ and $j= 0,1, \ldots$. The boundary conditions implies $\phi_0^{j}=0$ and $\cfrac{\phi_{m}^j- \phi_{m-1}^{j}}{\delta x}=0$ for $j=0,1,\ldots$. The above becomes a linear system
\[
T \phi^{(j)} = r^{(j)},
\] 
where $\phi^{(j)}= \left(\phi_0^j, \ldots, \phi_m^j\right)^T$  and $r^{(j)}= \left(r_0^j, \ldots, r_m^j\right)^T$ where $r_i^j=\phi_i^{j+1}, r_m^j= 0$.  Denote $a=\cfrac{c^2\,\delta t}{(\delta x)^2}$, then
\[
T= \begin{pmatrix} 1&0\\
a&1-2a&a\\
&a&1-2a&a\\
&&\ddots&\ddots&\ddots\\
&&&a&1-2a&a\\
&&&&-1&1
\end{pmatrix}.
\]
The first and last rows are adjusted as per the boundary conditions. We depict the plots of pseudospectrum for $T_{10 \times 10}$ for various values of $a$ and $\e$.
\begin{figure}[h!]
\centering
\begin{subfigure}[b]{0.45\textwidth}
\centering
\includegraphics[width=\textwidth]{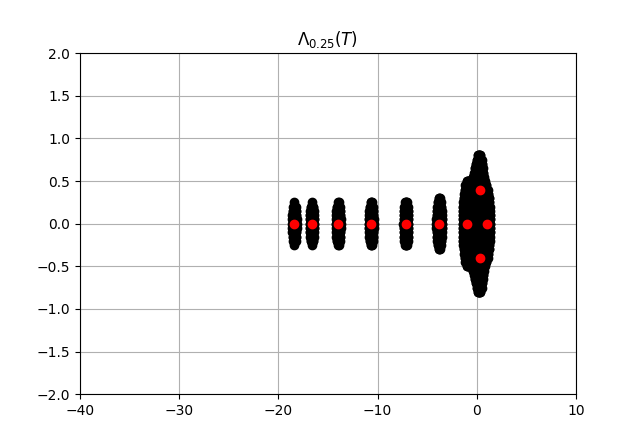}
\caption*{a=5, $\e=0.25$}
\label{fig1}
\end{subfigure}
\hfill
\begin{subfigure}[b]{0.45\textwidth}
\centering
\includegraphics[width=\textwidth]{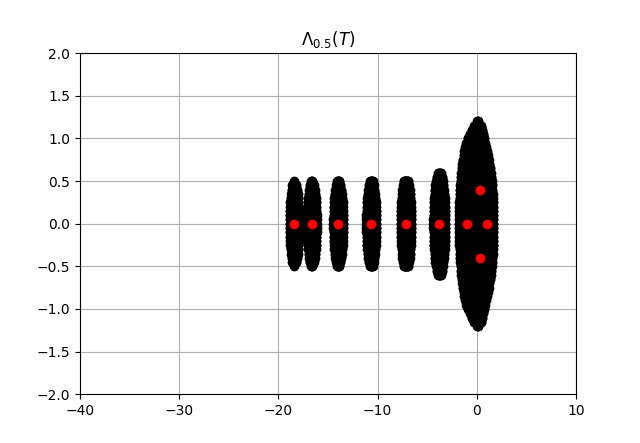}
\caption*{a=5, $\e=0.5$}
\label{fig2}
\end{subfigure}
\end{figure}

\begin{figure}[h!]
\centering
\begin{subfigure}[b]{0.45\textwidth}
\centering
\includegraphics[width=\textwidth]{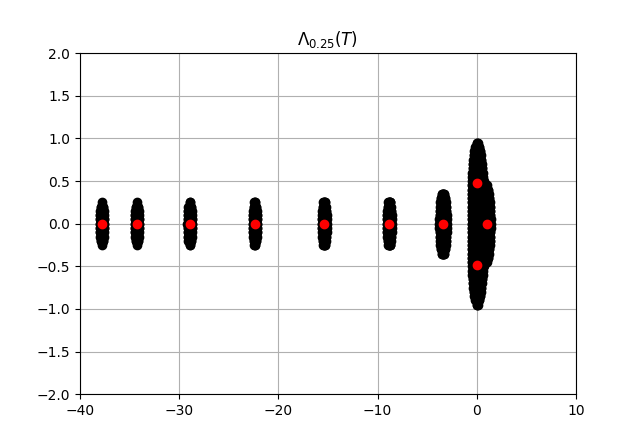}
\caption*{a=10, $\e=0.25$}
\label{fig3}
\end{subfigure}
\hfill
\begin{subfigure}[b]{0.45\textwidth}
\centering
\includegraphics[width=\textwidth]{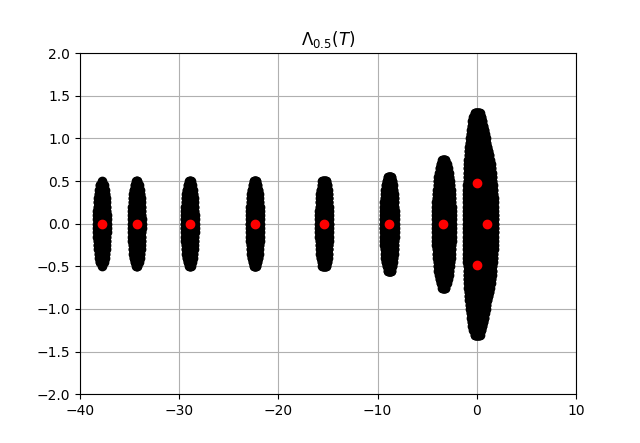}
\caption*{a=10, $\e=0.5$}
\label{fig4}
\end{subfigure}
\end{figure}
The pseudospectra is used to measure the departure from normality or self-adjoint. In Theorem \ref{thm1}, we have seen that the pseudospectra of the heat operator pencil $(A, B)$ is contained in the pseudospectra of the self-adjoint operator $D^2$. The plots of pseudospectra of the discretized one-dimensional heat equation behave in a well-mannered way towards the negative real axis, and we have observed that the eigenvalues away from the origin are less sensitive to perturbations. 
\bibliographystyle{amsplain}

\end{document}